\theoremstyle{plain}
\newtheorem{theorem}{Theorem}
\newtheorem*{theorem*}{Theorem}
\newtheorem*{corollary*}{Corollary}
\newtheorem{lemma}{Lemma}
\newtheorem*{lemma*}{Lemma}
\newtheorem*{proposition*}{Proposition}
\newtheorem*{conjecture*}{Conjecture}
\theoremstyle{definition}
\newtheorem*{definition*}{Definition}
\theoremstyle{remark}
\newtheorem{remark}{Remark}
\newtheorem*{remark*}{Remark}
\begin{document}

\title[Functions with complicated local structure]{On one class of functions with~complicated local structure}
\author{Symon Serbenyuk}
\address{Institute of Mathematics \\
 National Academy of Science of Ukraine \\
  3, Tereschenkivska Str. \\
  Kyiv \\
  01601 \\
  Ukraine}
\email{simon6@ukr.net}



\subjclass[2010]{Primary 26A27; Secondary 11K55}

\keywords{Function with complicated local structure, nowhere differentiable function, self-similar set, Hausdorff--Besicovitch dimension, Lebesgue integral.}

\begin{abstract}

We introduce a class $\Lambda_{s}$ of functions with complicated local structure. Any function from the class belongs to one of three specifically defined types $f^s _k$, $f_+$, and $f^{-1} _+$ or is a specifically defined composition of two or three functions of these types. Differential, integral, fractal and other properties of such functions are investigated. In particular, all functions $f$ from  $\Lambda_{s}$ such that $f(x)\ne x$, $f(x)\ne -\frac{s-1}{s+1}-x$ and $f(x)\ne 1-x$ are nonmonotonic and nowhere differentiable. The Hausdorff--Besicovitch dimension of a plot of each $f \in \Lambda_s$ is equal to $1,$ and the Lebesgue integral of $f$ is equal to $\frac 1 2$. The proof of these statements for the compositions 
and the corresponding proofs for $f^s _k$, $f_+$, and $f^{-1} _+$ are similar.
 
\end{abstract}

\maketitle



\section{Introduction}

Before the nineteenth century due to a number of objective reasons, the mathematicians believe intuitively that any function has derivatives of all orders. With the lapse of time, this idea began to give rise to doubt. Among the first scientists-opponents, we mention Bolzano, Lobachevsky, Dirichlet, and Weierstrass. The latter constructed the example of a continuous nowhere differentiable function in 1871~\cite[pp.~105--108]{Turbin:1992:FMF}. Thereafter, a lot of  mathematicians concentrated their attention on finding the examples and the collections of such functions~\cite{{Pratsevityi:1989:NKP}, {Pratsiovytyi:1998:FPD}, {Pratsiovytyi:2002:FVO}, {Turbin:1992:FMF}}.

The construction of new examples of continuous nowhere differentiable functions was accompanied by the development of new methods of definition of such functions. In the present article, we use a rather simple means to set a whole class of functions with complicated local structure such that almost all functions of the class are everywhere continuous and  nowhere differentiable.


Let $s>1$ be a fixed natural number, and let the set $A=\{0,1,...,s-1\}$ be the alphabet of an s-adic or nega-s-adic number system. The notation $x=~\Delta^{\pm s} _{\alpha_1\alpha_2...\alpha_n...}$ means that $x$ is represented by the s-adic or nega-s-adic number system, i.e.,
$$
x=\sum^{\infty} _{n=1}{\frac{\alpha_n}{s^n}}\equiv \Delta^{ s} _{\alpha_1\alpha_2...\alpha_n...}
$$
or
$$
x=\sum^{\infty} _{n=1}{\frac{(-1)^n\alpha_n}{s^n}}\equiv \Delta^{ -s} _{\alpha_1\alpha_2...\alpha_n...}, \alpha_n\in A.
$$

Let $\Lambda_{s}$ be a class of functions of the type 
\begin{equation}
  \label{eq:1}
 f: x=\Delta^{\pm s} _{\alpha_1\alpha_2...\alpha_n...} \to \Delta^{\pm s} _{\beta_1\beta_2...\beta_n...}=f(x)=y,
\end{equation}
where $\left(\beta_{km+1},\beta_{km+2},...,\beta_{(m+1)k}\right)=\theta\left(\alpha_{km+1},\alpha_{km+2},...,\alpha_{(m+1)k}\right)$, the number $k$ is a fixed positive integer for a specific function $f$, $m=0,1,2,...,$ and
$\theta(\gamma_1,\gamma_2,...,\gamma_k)$ is some function of $k$ variables (it is the bijective correspondence) such that the  set  
$$
A^k=\underbrace{A \times A \times ...\times A}_{k}.
$$
is its domain of definition and range of values.

Each combination $(\gamma_1,\gamma_2,...,\gamma_k)$ of $k$ s-adic or nega-s-adic digits (according to the number representation of the argument of a function $f$) is assigned to the single combination $\theta(\gamma_1,\gamma_2,...,\gamma_k)$ of $k$ s-adic or nega-s-adic digits  (according to the number representation of the value of a function $f$). The combination $\theta(\gamma_1,\gamma_2,...,\gamma_k)$ is assigned to the unique combination $(\gamma^{'} _1,\gamma^{'} _2,...,\gamma^{'} _k)$ that may be not to match with 
$(\gamma_1,\gamma_2,...,\gamma_k)$. The $\theta$ is a bijective function on $A^k$.

 It is clear that any function  $f \in \Lambda_{s}$ is  one of the following functions:
$$
f^s _k, ~~~f_+,~~~ f^{-1} _+,~~~ f_+ \circ f^s _k,~~~ f^s _k \circ f^{-1} _+, ~~~ f_+ \circ f^s _k \circ  f^{-1} _+, 
$$
where
\begin{equation}
\label{form: f^s _k1}
f^s _k\left(\Delta^s _{\alpha_1\alpha_2...\alpha_n...}\right)=  \Delta^{s}_{\beta_{1}\beta_{2}...\beta_{n}...}, 
\end{equation}
$\left(\beta_{km+1},\beta_{km+2},...,\beta_{(m+1)k}\right)=\theta\left(\alpha_{km+1},\alpha_{km+2},...,\alpha_{(m+1)k}\right)$ for $m=0,1,2,...,$ and  some fixed natural number $k$, i.e., 
$$
\left(\beta_{1},\beta_{2},...,\beta_{k}\right)=\theta\left(\alpha_{1},\alpha_{2},...,\alpha_{k}\right),
$$
$$
\left(\beta_{k+1},\beta_{k+2},...,\beta_{2k}\right)=\theta\left(\alpha_{k+1},\alpha_{k+2},...,\alpha_{2k}\right),
$$
$$
.........................
$$
$$
\left(\beta_{km+1},\beta_{km+2},...,\beta_{(m+1)k}\right)=\theta\left(\alpha_{km+1},\alpha_{km+2},...,\alpha_{(m+1)k}\right),
$$
$$
.........................
$$
and
 \begin{equation}
\label{form: f _+1}
f_+\left(\Delta^s _{\alpha_1\alpha_2...\alpha_n...}\right)=\Delta^{-s} _{\alpha_1\alpha_2...\alpha_n...},
\end{equation}
\begin{equation}
\label{form: f^{-1} _+1}
f^{-1} _+\left(\Delta^{-s} _{\alpha_1\alpha_2...\alpha_n...}\right)= \Delta^s _{\alpha_1\alpha_2...\alpha_n...}.
\end{equation}

In \cite{Serbenyuk:2012:OMS}, the following function was considered:
$$
x=\Delta^3 _{\alpha_1\alpha_2...\alpha_n...} \stackrel{f}{\rightarrow} \Delta^{3}_{\varphi(\alpha_{1})\varphi(\alpha_{2})...\varphi(\alpha_{n})...}=f(x)=y,
$$
where $\varphi\left(\alpha_{n}\right)$ is a function defined in terms of the s-adic number system in the following way:

\begin{center}
\begin{tabular}{|c|c|c|c|}
\hline
$\alpha_n$ & $ 0 $& $1$ & $2$\\
\hline
$\varphi (\alpha_n)$ & $0$ & $2$ & $1$\\
\hline
\end{tabular}
\end{center}

The function $f$ from  \cite{Serbenyuk:2012:OMS} is a function  of the $f^3 _1$ type. Now, we present the example of the function $f^2 _2$:
$$
f^2 _2: \Delta^2 _{\alpha_1\alpha_2...\alpha_n...} \rightarrow \Delta^2 _{\beta_1\beta_2...\beta_n...},
$$ 
where $(\beta_{2m+1},\beta_{2(m+1)})=\theta(\alpha_{2m+1},\alpha_{2(m+1)})$, $m=0,1,2,3,...,$ and 
\begin{center}
\begin{tabular}{|c|c|c|c|c|c|}
\hline
$\alpha_{2m+1}\alpha_{2(m+1)}$ & $ 00 $& $01$ & $10$ & $11$\\
\hline
$\beta_{2m+1}\beta_{2(m+1)}$ & $10$ & $11$ & $00$ & $01$\\
\hline
\end{tabular}
\end{center}
 is an example of the $f^2 _2$-type function.

It is obvious that the set of $f^2 _1$ functions consists only of the functions $y=x$ and $y=1-x$ in the binary number system. But a set of $f^2 _2$ functions has the order equal to   $4!$ and includes the functions $y=x$ and  $y=1-x$ as well.

\begin{remark*}
The class of functions $\Lambda_s$ includes the following  linear functions $y=x$, 
$$
x=\Delta^s _{\alpha_1\alpha_2...\alpha_n...}\to \Delta^s _{[s-1-\alpha_1][s-1-\alpha_2]...[s-1-\alpha_n]...}=y=1-x,
$$
$$
x=\Delta^{-s} _{\alpha_1\alpha_2...\alpha_n...}\to \Delta^{-s} _{[s-1-\alpha_1][s-1-\alpha_2]...[s-1-\alpha_n]...}=y=-\frac{s-1}{s+1}-x.
$$
These functions are called \emph{$\Lambda_s$-linear functions.}
\end{remark*}

\section{On the well-posedness of definition of the functions from $\Lambda_{s}$}
\label{sec: correctness}

\begin{lemma}
For any function  $f$  from $\Lambda_{s}$ except for $\Lambda_s$-linear functions, values of function $f$ for different representations of s-adic rational numbers from ~$[0;1]$ (nega-s-adic rational numbers from $[-\frac{s}{s+1};\frac{1}{s+1}]$ respectively) are different.
\end{lemma}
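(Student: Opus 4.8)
The plan is to reduce to the basic case $f=f^s_k$ (the composite functions and the nega-$s$-adic statement being handled by the same blockwise bookkeeping, as the abstract promises) and to compare, for an $s$-adic rational $x$, the two digit strings obtained by applying $\theta$ to the two $s$-adic expansions of $x$. Recall that such an $x$ has exactly two expansions: a finite one $\Delta^s_{c_1\ldots c_t00\ldots}$ with $c_t\ne0$, and a period-$(s-1)$ one $\Delta^s_{c_1\ldots c_{t-1}(c_t-1)(s-1)(s-1)\ldots}$. I group digits into blocks of length $k$, read each block as an integer in $\{0,1,\ldots,s^k-1\}$, write $N(\cdot)$ for this value, and (abusing notation) let $\theta$ also denote the induced permutation of $\{0,1,\ldots,s^k-1\}$. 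The first observation is that, regardless of $t\bmod k$, the two expansions agree on all blocks up to some index $m_0$, differ in block $m_0$ by exactly one unit (the finite expansion giving the larger value $N$, the other giving $N-1$), and thereafter consist of the constant blocks $(0,\ldots,0)$ and $(s-1,\ldots,s-1)$ respectively. This requires running the carry analysis once for a break occurring inside a block and once for a block-aligned break.

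Next I would apply $f$, i.e.\ apply $\theta$ to every block. The leading $m_0$ blocks are common to both images and cancel, so equality of the two images reduces to equality of two geometric-type tail series. Summing them and clearing denominators yields the single arithmetic condition
\[
\bigl(\theta(N)-\theta(N-1)\bigr)(s^k-1)=\theta(s^k-1)-\theta(0),
\]
where on the right $\theta(0)=N(\theta(0,\ldots,0))$ and $\theta(s^k-1)=N(\theta(s-1,\ldots,s-1))$ record the images of the two constant tail-blocks. Thus the two representations of the rational attached to the value $N$ give the same value under $f$ precisely when this identity holds for that $N$.

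The decisive step is the characterization. If the displayed identity held for \emph{every} $N\in\{1,\ldots,s^k-1\}$, then, since its right-hand side does not depend on $N$, the difference $\theta(N)-\theta(N-1)$ would be a constant $c=\frac{\theta(s^k-1)-\theta(0)}{s^k-1}$, so $\theta(N)=\theta(0)+cN$ would be an arithmetic progression. A progression that is a bijection of $\{0,\ldots,s^k-1\}$ must have step $c=\pm1$ (for $|c|\ge2$ its values overflow the range, and $c=0$ is not injective), forcing either $\theta=\mathrm{id}$ with $\theta(0)=0$, so that $f$ is $y=x$, or $\theta(N)=s^k-1-N$, the digitwise complement, so that $f$ is $y=1-x$. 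These are exactly the $\Lambda_s$-linear functions. Taking the contrapositive, for any $f$ that is \emph{not} $\Lambda_s$-linear the identity fails for at least one $N$, and the corresponding $s$-adic rational is then sent by $f$ to two different values through its two expansions, which is the assertion.

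Finally, the nega-$s$-adic statement and the composites $f_+\circ f^s_k$, $f^s_k\circ f^{-1}_+$, $f_+\circ f^s_k\circ f^{-1}_+$ follow by the same computation: $f_+$ and $f^{-1}_+$ only reinterpret a fixed digit string in the other radix and so do not disturb the blockwise action of $\theta$, while the analogue of the double-expansion phenomenon (now with the endpoints of $[-\tfrac{s}{s+1};\tfrac1{s+1}]$ playing the role of the constant tails) leads to the same arithmetic condition. I expect the main obstacle to be the first step: verifying cleanly, for an arbitrary break position $t$ not necessarily divisible by $k$, that the two expansions really do reduce to ``one block shifted by a unit followed by constant tails,'' and then transcribing this carry bookkeeping faithfully into the nega-$s$-adic system, where the signs $(-1)^n$ make both the carries and the tail sums considerably less transparent.
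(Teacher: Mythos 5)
Your proof is correct, and its skeleton is the same as the paper's: compare the two $s$-adic expansions of a rational block by block, apply $\theta$, and reduce everything to the images of the break block and of the two constant tail blocks $(0,\ldots,0)$ and $(s-1,\ldots,s-1)$. The difference is that you supply the step the paper only asserts. The paper examines the particular rational $s^{-k}$, writes down the general shape of the two image expansions, and then simply states that they coincide ``when $f^s_k=x$'' (resp.\ $f^s_k=1-x$); you instead extract the exact per-rational criterion $\bigl(\theta(N)-\theta(N-1)\bigr)(s^k-1)=\theta(s^k-1)-\theta(0)$ and prove the characterization: if it holds for \emph{all} $N$, then $\theta$ is an arithmetic progression of step $\pm1$ on $\{0,\ldots,s^k-1\}$, hence the identity or the complement, i.e.\ $f$ is $\Lambda_s$-linear. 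That arithmetic-progression argument is the rigorous content missing from the source, and your treatment of $f_+$, $f^{-1}_+$ and the composites is no more (and no less) sketchy than the paper's own ``the proof is analogous.''

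One point deserves emphasis, because your criterion makes it visible. Only the \emph{existence} reading of the lemma can be true: the displayed identity can hold at isolated values of $N$ for a non-linear $\theta$, so a non-linear $f^s_k$ may take equal values at the two expansions of \emph{some} rational. Concretely, for $s=2$, $k=3$, let $\theta$ fix $000,100,101,110,111$ and cycle $001\mapsto 010\mapsto 011\mapsto 001$; then the two expansions $\Delta^2_{010000\ldots}$ and $\Delta^2_{001111\ldots}$ of $x=\frac{1}{4}$ are both sent to $\frac{3}{8}$. Thus the strong reading of the statement (``for every rational the two values differ'') is false, and what you prove --- for non-linear $f$ there is at least one rational whose two expansions receive different values, so $f$ is ill-defined on rationals without the convention imposed in the subsequent Remark --- is the only tenable version, and also exactly what the paper's own argument (equality for all rationals forces linearity) amounts to. Your blind reading therefore lands on the correct interpretation, and your proof of it is sound.
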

\begin{proof}
Consider an s-adic rational number
$$
x_{(0,n)}=\Delta^s _{\alpha_1\alpha_2...\alpha_{n-1}\alpha_n000...}=\Delta^s _{\alpha_1\alpha_2...\alpha_{n-1}[\alpha_n-1][s-1][s-1][s-1]...}=x_{(s-1,n)}, ~n \in \mathbb Z_0.
$$ 
Let $n = k$. Then the equality  $f^s _k(x_{(0,n)})=f^s _k(x_{(s-1,n)})$ is true for the numbers  $x_{(0,k)}=\Delta^s _{\underbrace{0...01}_{k}000...}$ та $x_{(s-1,k)}=\Delta^s _{\underbrace{0...0}_{k}[s-1][s-1][s-1]...}$  as soon as the system
$$
\left\{
\begin{aligned}
\theta(\underbrace{0,0,...,0,0}_{k}) & =(\underbrace{0,0,...,0,0}_{k}),\\
\theta(\underbrace{s-1,s-1,...,s-1,s-1}_{k}) & =(\underbrace{s-1,s-1,...,s-1,s-1}_{k});
\end{aligned}
\right.\\
$$
is true, and $f^s _k (\Delta^s _{\underbrace{0...01}_{k}000...})=\Delta^s _{\underbrace{0...01}_{k}000...}=\Delta^s _{\underbrace{0...0}_{k}[s-1][s-1][s-1]...}.$ 

In the general case, we have
\begin{align*}
f^s _k (x_{(0,n)})&=f^s _k(\Delta^s _{\alpha_1\alpha_2...\alpha_{n-1}\alpha_n000...})=\Delta^s _{\beta_1\beta_2...\beta_{n-1}\beta_n000...}
\intertext{and}
f^s _k(x_{(s-1,n)})&=f^s _k (\Delta^s _{\alpha_1\alpha_1...\alpha_{n-1}[\alpha_n-1][s-1][s-1][s-1]...})= \\
&=\Delta^s _{\beta_1\beta_2...\beta_{n-1}[\beta_{n}-1][s-1][s-1][s-1]...}.
\end{align*}

In other words, $f^s _k(x_{(s-1,n)})=f^s _k(x_{(0,n)})$, when $f^s _k=x$.

If
$$
\left\{
\begin{aligned}
\theta(\underbrace{0,0,...,0,0}_{k}) & =(\underbrace{s-1,s-1,...,s-1,s-1}_{k}),\\
 (\underbrace{0,0,...,0,0}_{k})& =\theta(\underbrace{s-1,s-1,...,s-1,s-1}_{k}),
\end{aligned}
\right.\\
$$
then the equality 
$$
f^s _k(x_{(s-1,n)})=f^s _k(x_{(0,n)})
$$
 is true, when  $f^s _k=1-x$.

Analogously,
$$
f_{+} (\Delta^s _{\alpha_1\alpha_2...\alpha_{n-1}\alpha_n000...}) \ne f_{+}(\Delta^s _{\alpha_1\alpha_2...\alpha_{n-1}[\alpha_n-1][s-1][s-1][s-1]...}),
$$
and
$$
f^{-1} _{+}(\Delta^{-s} _{\alpha_1\alpha_2...\alpha_{n-1}\alpha_n[s-1]0[s-1]0...}) \ne f^{-1} _{+}(\Delta^{-s} _{\alpha_1\alpha_2...\alpha_{n-1}[\alpha_n-1]0[s-1]0[s-1]0...}).
$$

The proof of the lemma is analogous for the combinations $f_{+} \circ f^s _k $,   $f^s _k \circ f^{-1} _+$, and $f_+ \circ f^s _k \circ  f^{-1} _+$.
\end{proof}

\begin{remark}
From unique representation for each s-adic irrational number from $[0;1]$ it follows that the  function $f^s_k$ is well defined at s-adic irrational  points.

To reach that any function $f \in \Lambda_{s}$ such that $f(x)\ne x$ and $f(x)\ne1-x$ be well-defined on the set of s-adic rational numbers from $[0;1]$, we shall not consider the s-adic representation, which has period $(s-1)$.

Analogously, we shall not consider the nega-s-adic representation, which has period $(0[s-1])$.
\end{remark}

\section{Some properties}
\label{sec: Some properties}

\begin{lemma}
A set of functions  $f^s _k$ with the defined operation ``composition of functions'' is a finite group that has order equal to $\left(s^k\right)!$.
\end{lemma}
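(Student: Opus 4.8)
The plan is to identify the set of functions of type $f^s_k$ (for a fixed block length $k$) with the symmetric group $S_{A^k}$ of all permutations of the finite set $A^k$, and then to read off both the group structure and the order from this identification. Since $A^k$ has exactly $s^k$ elements, $S_{A^k}$ has order $(s^k)!$, which will yield the claimed cardinality.

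First I would make precise the correspondence $\theta \mapsto f^s_k$ between a bijection $\theta$ of $A^k$ and the associated function defined in \eqref{form: f^s _k1}. By definition every $f^s_k$ arises from some such $\theta$, so this correspondence is onto the set in question. For injectivity, observe that if $\theta_1 \ne \theta_2$, then they differ on some block $(\gamma_1,\ldots,\gamma_k) \in A^k$; evaluating the two functions at the s-adic irrational point $x = \Delta^s_{\gamma_1\ldots\gamma_k\gamma_1\ldots\gamma_k\ldots}$ obtained by repeating this block (whose representation is not eventually the period $(s-1)$, as required by the Remark), the very first blocks of the two images already differ, so the functions are distinct. Hence $\theta \mapsto f^s_k$ is a bijection.

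The central step is closure, together with the explicit computation of the composition. Let $f^s_k$ be associated with $\theta$ and $g^s_k$ with $\sigma$. For an argument $x = \Delta^s_{\alpha_1\alpha_2\ldots}$ taken in its admissible representation, $f^s_k$ replaces each block $(\alpha_{km+1},\ldots,\alpha_{(m+1)k})$ by $\theta(\alpha_{km+1},\ldots,\alpha_{(m+1)k})$, and applying $g^s_k$ then replaces each resulting block by its image under $\sigma$. Because both functions act on the \emph{same} partition of the digit sequence into blocks of length $k$, the composite replaces each original block by $(\sigma \circ \theta)$ of it, so $g^s_k \circ f^s_k$ is again a function of type $f^s_k$, namely the one associated with the bijection $\sigma \circ \theta$ of $A^k$. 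Thus the map $\theta \mapsto f^s_k$ carries composition of permutations to composition of functions, so it is a group isomorphism onto $S_{A^k}$.

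With closure and the composition formula in hand, the remaining axioms are immediate: associativity is inherited from composition of maps; the identity element is the function associated with $\mathrm{id}_{A^k}$, which is precisely $y = x$; and the inverse of the function associated with $\theta$ is the one associated with $\theta^{-1}$, which again lies in the set. Therefore the set of functions $f^s_k$ with composition is a group isomorphic to $S_{A^k}$, of order $|S_{A^k}| = (s^k)!$. The step I expect to be the main obstacle is the well-definedness of the block-by-block composition in the presence of non-unique s-adic representations: a priori, applying $f^s_k$ could produce a value whose admissible representation no longer splits along the same blocks, which would spoil the clean formula $\sigma \circ \theta$. This is exactly what the well-posedness Lemma and the Remark are for --- once we fix the convention of discarding representations with period $(s-1)$, each $f^s_k$ sends admissible representations to admissible representations block-by-block, so the composition is governed by $\sigma \circ \theta$ on all of $[0;1]$.
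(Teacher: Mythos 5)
Your proposal is correct and takes essentially the same approach as the paper: the paper's own proof is just the one-line identification of the set of $f^s_k$ functions with the symmetric group of permutations of $A^k$ (hence order $\left(s^k\right)!$), which is exactly your isomorphism $\theta \mapsto f^s_k$. You simply supply the injectivity, closure, and representation-convention details that the paper leaves implicit.
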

\begin{proof}
Let $s>1$ be a positive integer and let $k \in \mathbb N$ be an arbitrary number. The set $A^{k} _{(s)}$ is a set of all samples ordered with reiterations of $k$ numbers from $A=\{0,1,...,s-1\}$. The set $f^s_k$ is a group of permutations of elements from  $A^{k} _{(s)}$. In other words, $f^s _k$ is a symmetric group.
\end{proof}

\begin{lemma}
The function $f \in \Lambda_{s}$ such that $f(x)\ne x$, $f(x)\ne -\frac{s-1}{s+1}-x$ and $f(x)\ne 1-x$ has the following properties:
\begin{enumerate}
\item $f$ reflects  $[0;1]$ or $[-\frac{s}{s+1};\frac{1}{s+1}]$ (according to the number representation of the argument of a function $f$) into one of the segments $[0;1]$ or $[-\frac{s}{s+1};\frac{1}{s+1}]$  without enumerable subset of points (according to the number representation of the value of a function $f$) .
\item the function $f$ is not monotonic on the domain;
\item the function $f$ is not a bijective mapping on the domain.
\end{enumerate}
\end{lemma}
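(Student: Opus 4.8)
The plan is to carry out the argument for the representative type $f=f^s_k$ and to note, as the statement itself remarks, that the types $f_+$, $f^{-1}_+$ and the three compositions are handled in the same way, the only change being which of the two segments $[0;1]$ and $[-\frac{s}{s+1};\frac{1}{s+1}]$ serves as domain and which as range. I would begin by encoding the block map: identifying a block $(\gamma_1,\dots,\gamma_k)\in A^k$ with the integer $\overline{\gamma_1\dots\gamma_k}=\sum_{j=1}^{k}\gamma_j s^{k-j}\in\{0,1,\dots,s^k-1\}$, the bijection $\theta$ becomes a permutation $\sigma$ of $\{0,1,\dots,s^k-1\}$. In this language $f=x$ is $\sigma=\mathrm{id}$, $f=1-x$ is the reversal $\sigma(i)=s^k-1-i$, and (in the nega-$s$-adic range) $f=-\frac{s-1}{s+1}-x$ is the analogous reversal, so the three excluded linear functions are exactly the cases $\sigma=\mathrm{id}$ and $\sigma=$ reversal. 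The hypotheses of the lemma therefore say precisely that $\sigma$ is neither the identity nor the reversal, and I would derive all three assertions from this single fact.

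For (2) I would use the elementary observation that a permutation of $\{0,\dots,s^k-1\}$ with no descent is the identity and one with no ascent is the reversal; hence our $\sigma$ has both an ascent $\sigma(a)<\sigma(a+1)$ and a descent $\sigma(b)>\sigma(b+1)$. The cylinder of arguments whose leading block is $i$ is mapped by $f$ into the set of values whose leading block is $\sigma(i)$, a set contained in $[\sigma(i)s^{-k},(\sigma(i)+1)s^{-k}]$. Taking interior points $x_1<x_2$ of the consecutive cylinders $a$ and $a+1$ forces $f(x_1)<f(x_2)$ (since $\sigma(a)+1\le\sigma(a+1)$), while interior points $x_3<x_4$ of the cylinders $b$ and $b+1$ force $f(x_3)>f(x_4)$. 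One increasing and one decreasing pair rule out monotonicity; for the types built from $f_+$ or $f^{-1}_+$ the sign reversal in the nega-$s$-adic reading already supplies such a pair.

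For (1) I would factor $f=\pi\circ\widehat\theta\circ\pi^{-1}$, where $\pi$ is the appropriate representation map and $\widehat\theta$ is the block-wise action of $\theta$, a bijection of $A^{\infty}$ onto itself. At the irrational points the representation is unique, so $\widehat\theta$ carries the irrational points of the domain segment bijectively onto the irrational points of the range segment; thus every irrational point of the target is attained. The only possible losses occur at the $s$-adic (resp. nega-$s$-adic) rational numbers, which form a countable set and are exactly the points with two representations; the convention of the Remark retains only the representation not ending in the period $(s-1)$ (resp. $(0[s-1])$), and the images of these countably many rational arguments again form a countable set. Hence $f$ maps its domain segment onto the target segment with at most a countable set of points removed, which is assertion (1), and (3) follows at once: once one knows that a nonempty countable set of target points is genuinely omitted, $f$ cannot be onto the target segment and so is not a bijection.

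The step I expect to be the main obstacle is precisely the last one — showing that the omitted set is nonempty. Writing $B^{*}=\theta(s-1,\dots,s-1)$, the image misses exactly those reals all of whose representations end in the block-period $B^{*}$, and a short analysis shows this set is nonempty only when $B^{*}$ is neither the all-zero block nor the all-$(s-1)$ block, i.e. only when $\sigma(s^k-1)\notin\{0,s^k-1\}$. Thus the delicate point is to control how $\sigma$ moves the extreme block $s^k-1$, and this is where the proof requires the most care; the corresponding bookkeeping for $f_+$, $f^{-1}_+$ and the compositions, with the roles of the two segments interchanged, is then entirely parallel.
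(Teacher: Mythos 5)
Your treatment of properties (1) and (2) is essentially sound, and for (2) your ascent/descent argument for the induced permutation $\sigma$ of $\{0,1,\dots,s^k-1\}$ is in fact more complete than the paper's, which proves non-monotonicity in detail only for $f_+$ (via the parity of the position of the first differing digit) and merely asserts it for $f^s_k$. One inaccuracy in (1): $\widehat\theta$ does \emph{not} carry s-adic irrational points bijectively onto the s-adic irrational points of the range (a point whose representation repeats the block $\theta^{-1}(0,\dots,0)$ is s-adic irrational when that block is not $(0,\dots,0)$ or $(s-1,\dots,s-1)$, yet it maps to $0$); but your conclusion survives, because every omitted value must have all of its representations inside $\widehat\theta(\{\text{inadmissible sequences}\})$, a countable set.

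The genuine gap is in (3), and it is exactly the one you flagged without closing. You reduce ``not bijective'' to ``not surjective onto the target segment,'' and by your own analysis the omitted set is nonempty only when $\sigma(s^k-1)\notin\{0,s^k-1\}$. The excluded cases are not vacuous: take $s=2$, $k=2$, and $\theta$ fixing $00$ and $11$ while swapping $01\leftrightarrow 10$; this is neither the identity nor the reversal, yet every admissible digit sequence pulls back under $\widehat\theta$ to an admissible one, so the omitted set is empty and non-surjectivity is simply unavailable as a route to (3). The paper argues (3) in the opposite direction, through failure of \emph{injectivity}: for an s-adic rational value $y_{1,2}$ with its two representations, it pulls both representations back blockwise to obtain two distinct arguments $x_1\ne x_2$ with $f^s_k(x_1)=f^s_k(x_2)=y_{1,2}$, so $f^s_k$ fails to be injective on a set $M$ of rational points. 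A complete write-up on your part should switch to (or add) this injectivity argument; note, however, that it too interacts with the Remark's convention --- when $\theta$ fixes the block $(s-1,\dots,s-1)$, one of the two pullbacks is an excluded representation --- so the case you identified as delicate is delicate for the injectivity route as well, and is precisely where any complete proof of (3) must do additional work.
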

\begin{proof} From \eqref{form: f^s _k1}, it follows the first property for $f$ and the second property for $f^s _k$. 

{\itshape Function $f_+$.}  Let  $x_1=\Delta^s _{\alpha_1\alpha_2...\alpha_n...}$ and $x_2=\Delta^s _{\beta_1\beta_2...\beta_n...}$ be such that $x_1 < x_2$. It is obvious that there exists $n_0$ such that $\alpha_j=\beta_j$ for all $j=\overline{1,n_0-1}$ and $\alpha_{n_0}<\beta_{n_0}$. This implies the system
$$
\left\{
\begin{array}{rcl}
f (x_1)=\Delta^{-s} _{\alpha_1\alpha_2...\alpha_{n_0-1}\alpha_{n_0}...}<&\Delta^{-s} _{\beta_1\beta_2...\beta_{n_0-1}\beta_{n_0}...}=f (x_2), &\text{if $n_0 \equiv 0 \pmod 2$;}\\
f (x_1)=\Delta^{-s} _{\alpha_1\alpha_2...\alpha_{n_0-1}\alpha_{n_0}...}>&\Delta^{-s} _{\beta_1\beta_2...\beta_{n_0-1}\beta_{n_0}...}=f (x_2), &\text{if  $n_0 \equiv 1 \pmod 2$ }.\\
\end{array}
\right.
$$
So, $f_+$ is not monotonic. It is clear that the other functions $f \in \Lambda_{s}$ such that $f(x)\ne x$ and $f(x)\ne 1-x$ are not monotonic as well. 

The $f^s_k$-type function is a bijective mapping on $[0;1]$, when  $y^s _k=x$ or $y^s _k=1-x$. Let us have
$x_1=\Delta^s _{\alpha_1\alpha_2...\alpha_n...}$ and $x_2=\Delta^s _{\beta_1\beta_2...\beta_n...}$ such that  $x_1 \ne x_2$.

Let $y_{1,2}=\Delta^s _{\gamma_1\gamma_2...\gamma_n...}=f^s _k (x_1)=f^s _k (x_2)$ be an s-adic irrational number. Then, for each $m=0,1,...$,
$$
(\gamma_{km+1},\gamma_{km+2},...,\gamma_{(m+1)k})=\theta(\alpha_{km+1},\alpha_{km+2},...,\alpha_{(m+1)k})=\theta(\beta_{km+1},\beta_{km+2},...,\beta_{(m+1)k}).
$$
From the last equalities, it follows that $x_1=x_2$, but this  contradicts the condition.

Let  $y_{1,2}$ be an s-adic rational number, i.e.,
\begin{equation}
\label{a1}
y_{1,2}=\Delta^s _{\gamma_1\gamma_2...\gamma_{n-1}\gamma_n000...}=\Delta^s _{\gamma_1\gamma_2...\gamma_{n-1}[\gamma_n-1][s-1][s-1]...}.
\end{equation}

 It is obvious that $f^s _k$ is not a bijective mapping on the set $M \subset [0;1]$ of rational numbers from $[0;1]$ such that, 
for $M \ni x_1\ne x_2 \in M$, $f^s _k (x_1)=~f^s _k (x_2)=~y_{1,2}$.

It is easy to find the subsets of s-adic rational or nega-s-adic rational numbers such that the functions $ f^{-1} _+$, $ f_+ \circ f^s _k,$ $ f^s _k \circ f^{-1} _+$, $ f_+ \circ f^s _k \circ  f^{-1} _+$ are not bijective mappings on these subsets.
\end{proof}

\begin{lemma}
\label{lm4}
 For each $x \in [0;1]$, the function $ f_+$ satisfies the  equation
\begin{equation}
\label{eq1}
f(x) +f (1-x)=-\frac{s-1}{s+1};
\end{equation}
\end{lemma}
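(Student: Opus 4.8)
The plan is to reduce the identity to a term-by-term computation on the s-adic digits. First I would fix an s-adic representation $x=\Delta^{s}_{\alpha_1\alpha_2\ldots\alpha_n\ldots}$ of $x\in[0;1]$. By the definition of the $\Lambda_s$-linear function $y=1-x$ recorded in the Remark, the complementary number is obtained by replacing each digit by its complement, so that $1-x=\Delta^{s}_{[s-1-\alpha_1][s-1-\alpha_2]\ldots[s-1-\alpha_n]\ldots}$. Since $f_+$ keeps the digit string and only reinterprets it in the nega-s-adic system, I obtain
\begin{equation*}
f_+(x)=\sum_{n=1}^{\infty}\frac{(-1)^n\alpha_n}{s^n},\qquad f_+(1-x)=\sum_{n=1}^{\infty}\frac{(-1)^n(s-1-\alpha_n)}{s^n}.
\end{equation*}

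Next I would add the two series. The contributions of the digits $\alpha_n$ cancel in each term, and what remains is independent of $x$:
\begin{equation*}
f_+(x)+f_+(1-x)=\sum_{n=1}^{\infty}\frac{(-1)^n(s-1)}{s^n}=(s-1)\sum_{n=1}^{\infty}\left(-\frac{1}{s}\right)^n.
\end{equation*}
The remaining geometric series sums to $-\frac{1}{s+1}$, whence $f_+(x)+f_+(1-x)=-\frac{s-1}{s+1}$, as claimed. The absolute convergence of both series, each being dominated by $\sum_{n}(s-1)/s^n$, justifies the term-by-term addition.

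The one point that requires care, and which I expect to be the main obstacle, is the non-uniqueness of the s-adic representation at s-adic rational points: there both $f_+$ and the complement $1-x$ depend on the chosen representation. I would resolve this by checking that the computation above remains valid for either representation, provided the complement is taken digit by digit from the same representation used for $x$. Concretely, if $x=\Delta^{s}_{\alpha_1\ldots\alpha_{n-1}\alpha_n 000\ldots}$ then $1-x=\Delta^{s}_{[s-1-\alpha_1]\ldots[s-1-\alpha_{n-1}][s-1-\alpha_n][s-1][s-1]\ldots}$, while the other representation $x=\Delta^{s}_{\alpha_1\ldots\alpha_{n-1}[\alpha_n-1][s-1][s-1]\ldots}$ pairs with the terminating complement; in both cases the digits of $x$ and of $1-x$ sum to $s-1$ in every position, so the identical cancellation applies and yields the same value $-\frac{s-1}{s+1}$. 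Thus the identity holds for every $x\in[0;1]$ irrespective of the chosen representation.
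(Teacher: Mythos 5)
Your proof is correct and takes essentially the same approach as the paper: represent $1-x$ by the digit-wise complement of the chosen representation of $x$, apply $f_+$ to both digit strings, and add the two nega-s-adic series term by term, so that every position contributes $\frac{(-1)^n(s-1)}{s^n}$ and the sum is the constant $\Delta^{-s}_{(s-1)(s-1)\ldots}=-\frac{s-1}{s+1}$. Your closing discussion of the double representation of s-adic rationals is more careful than the paper's one-line computation, and it correctly identifies the condition the argument needs there, namely that $1-x$ be represented by the digit-wise complement of whichever representation is used for $x$.
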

\begin{proof} Let $x=\Delta^s _{\alpha_1\alpha_2...\alpha_n...}$ be an any number from $[0;1]$. Then
\begin{align*}
f_+ (x) +f _+ (1-x)&=f_+ (\Delta^s _{\alpha_1\alpha_2...\alpha_n...})+f_+ (\Delta^s _{[s-1-\alpha_1][s-1-\alpha_2]...[s-1-\alpha_n]...})= \\
&=\Delta^{-s} _{(s-1)}=- \frac{s-1}{s+1}.
\qedhere
\end{align*}
\end{proof}

\begin{lemma}\cite[pp. 13-14, 19]{Pelyuh:1974:VTF} Equation \eqref{eq1} has the solutions 
$$
f(x)=-\frac{s-1}{2(s+1)}+\begin{cases}
h(x-x^2),&\text{ $x<\frac1 2$;}\\
-h(x-x^2),&\text{ $x> \frac1 2$,}
\end{cases}
$$
where $h=\sqrt{\frac{1}{4}\left(\frac{s-1}{s+1}\right)^2-g}$, and $g$ is any function,
or
$$
f(x)=-\frac{s-1}{2(s+1)}+\left(x-\frac1 2\right)\omega\left(x-x^2\right),
$$
where $\omega(x)$ is any function.
\end{lemma}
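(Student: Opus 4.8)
The plan is to reduce the inhomogeneous functional equation \eqref{eq1} to a homogeneous symmetry condition and then to read off the two parametrized families. First I would set $c=-\frac{s-1}{s+1}$ and introduce $g_0(x)=f(x)-\frac{c}{2}$, so that \eqref{eq1} becomes $g_0(x)+g_0(1-x)=0$, equivalently $g_0(1-x)=-g_0(x)$. Thus the real content of the lemma is a description of all functions on $[0;1]$ that are antisymmetric with respect to the involution $\sigma\colon x\mapsto 1-x$, whose unique fixed point is $x=\frac12$. Note that $\frac{c}{2}=-\frac{s-1}{2(s+1)}$, which already accounts for the constant term appearing in both displayed formulas.

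The key structural facts are that $x-x^2=x(1-x)$ is $\sigma$-invariant while $x-\frac12$ changes sign under $\sigma$; consequently $\left(x-\frac12\right)^2=\frac14-(x-x^2)$ is again a function of $x-x^2$. For the second form I would verify directly that $g_0(x)=\left(x-\frac12\right)\omega(x-x^2)$ is antisymmetric for every $\omega$, being the product of the $\sigma$-odd factor $x-\frac12$ and the $\sigma$-even factor $\omega(x-x^2)$; this shows each such $f$ solves \eqref{eq1}. For completeness I would argue conversely that an arbitrary antisymmetric $g_0$ arises this way: for $x\ne\frac12$ put $\omega(x-x^2)=g_0(x)/\left(x-\frac12\right)$, check that this is genuinely a well-defined function of the single variable $x-x^2$ (the two preimages $x$ and $1-x$ of a given value of $x-x^2$ yield the same quotient because numerator and denominator are both $\sigma$-odd), and assign $\omega$ any value at the point corresponding to $x=\frac12$, where the factor $x-\frac12$ vanishes.

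For the first form I would verify analogously that on $x<\frac12$ the value is $\frac{c}{2}+h(x-x^2)$ and on $x>\frac12$ it is $\frac{c}{2}-h(x-x^2)$; since $x-x^2$ takes the same value at $x$ and at $1-x$, adding $f(x)$ and $f(1-x)$ cancels the $\pm h$ terms and returns $c$, so \eqref{eq1} holds for every choice of the free function. The square-root presentation $h=\sqrt{\frac14\left(\frac{s-1}{s+1}\right)^2-g}$ is merely a reparametrization: because $\frac14\left(\frac{s-1}{s+1}\right)^2=\left(\frac{c}{2}\right)^2$ and $g$ ranges over arbitrary functions, $h$ ranges over the admissible free functions, so the first form describes the same antisymmetric family as the second, only split according to the sign of $\frac12-x$.

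The main obstacle I anticipate is the completeness argument rather than the verification: one must check that $\omega$ in the second form is consistently defined as a function of $x-x^2$ across the two $\sigma$-related branches and at the fixed point $x=\frac12$, and that the sign convention in the square root of the first form does not secretly restrict the family. Since these are precisely the delicate points treated in the cited source, I would either invoke \cite[pp.~13--14, 19]{Pelyuh:1974:VTF} directly or else spell out the branch-matching at $x=\frac12$ explicitly to complete the proof.
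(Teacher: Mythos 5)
The paper does not prove this lemma at all: the statement is imported verbatim from the cited monograph, and the reference \cite[pp.~13--14, 19]{Pelyuh:1974:VTF} is the entire ``proof.'' So yours is the only genuine argument on the table, and it is essentially correct. Subtracting the constant, $g_0(x)=f(x)+\frac{s-1}{2(s+1)}$, does convert \eqref{eq1} into the antisymmetry $g_0(1-x)=-g_0(x)$; the product $\left(x-\frac{1}{2}\right)\omega\left(x-x^2\right)$ of a factor that is odd under $x\mapsto 1-x$ and a factor that is invariant under it is antisymmetric, so every such $f$ solves \eqref{eq1}; and your converse handles exactly the two delicate points: the quotient $g_0(x)/\left(x-\frac{1}{2}\right)$ descends to a well-defined function of $t=x-x^2$ because the two preimages $x$ and $1-x$ of any $t<\frac{1}{4}$ give equal values, and antisymmetry forces $g_0\left(\frac{1}{2}\right)=0$, so $\omega\left(\frac{1}{4}\right)$ may be chosen arbitrarily. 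One correction, though: the square-root convention in the first form \emph{does} restrict the family, contrary to your reparametrization claim. As written, $h\ge 0$ pointwise, so that form yields precisely the solutions satisfying $f(x)\ge -\frac{s-1}{2(s+1)}$ for $x<\frac{1}{2}$ and $f(x)\le -\frac{s-1}{2(s+1)}$ for $x>\frac{1}{2}$, a proper subfamily of the general antisymmetric solution; this is harmless for the lemma as stated, which only asserts that these formulas \emph{are} solutions, not that each form alone gives all of them (note also that \eqref{eq1} at $x=\frac{1}{2}$ forces $f\left(\frac{1}{2}\right)=-\frac{s-1}{2(s+1)}$, a point on which the first form is silent). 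As for what each approach buys: the paper's bare citation is adequate because the lemma plays a purely descriptive role---it records the functional equation satisfied by $f_+$ and is never invoked in later proofs---while your argument makes the result self-contained and, in the process, clarifies that only the $\omega$-form is exhaustive.
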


\begin{lemma}
For each $y \in [-\frac{s}{s+1};\frac{1}{s+1} ]$, the function   $ f^{-1} _+$ satisfies the equation
\begin{equation}
\label{eq2}
f^{-1} (y) +f^{-1}  \left(-\frac{s-1}{s+1}-y\right)=1.
\end{equation}
\end{lemma}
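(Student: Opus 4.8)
The plan is to mirror the argument of Lemma~\ref{lm4} for $f_+$, exploiting that $f^{-1}_+$ merely transfers the nega-$s$-adic digits of its argument to the $s$-adic digits of its value. First I would write an arbitrary $y\in[-\frac{s}{s+1};\frac{1}{s+1}]$ in nega-$s$-adic form $y=\Delta^{-s}_{\alpha_1\alpha_2\ldots\alpha_n\ldots}$, so that by \eqref{form: f^{-1} _+1} we have $f^{-1}_+(y)=\Delta^{s}_{\alpha_1\alpha_2\ldots\alpha_n\ldots}$.

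The key preliminary computation is to identify the constant $-\frac{s-1}{s+1}$ with the purely periodic nega-$s$-adic number $\Delta^{-s}_{(s-1)}$; summing the geometric series gives $\sum_{n=1}^{\infty}\frac{(-1)^n(s-1)}{s^n}=-\frac{s-1}{s+1}$. From this it follows termwise that $-\frac{s-1}{s+1}-y=\Delta^{-s}_{[s-1-\alpha_1][s-1-\alpha_2]\ldots[s-1-\alpha_n]\ldots}$, i.e.\ subtracting $y$ from the constant amounts precisely to the digitwise complement $\alpha_n\mapsto s-1-\alpha_n$. Applying \eqref{form: f^{-1} _+1} once more yields $f^{-1}_+\!\left(-\frac{s-1}{s+1}-y\right)=\Delta^{s}_{[s-1-\alpha_1][s-1-\alpha_2]\ldots}$.

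Finally I would add the two values. Since $\sum_{n=1}^{\infty}\frac{s-1}{s^n}=1$, the $s$-adic number $\Delta^{s}_{[s-1-\alpha_1][s-1-\alpha_2]\ldots}$ equals $1-\Delta^{s}_{\alpha_1\alpha_2\ldots}$, which is exactly the $\Lambda_s$-linear relation $y=1-x$ recorded in the Remark of the introduction. Hence $f^{-1}_+(y)+f^{-1}_+\!\left(-\frac{s-1}{s+1}-y\right)=\Delta^{s}_{\alpha_1\alpha_2\ldots}+\left(1-\Delta^{s}_{\alpha_1\alpha_2\ldots}\right)=1$, which is \eqref{eq2}.

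The only subtlety—and the point I would treat with care—concerns nega-$s$-adic rational numbers, which admit two representations. By the Remark in Section~\ref{sec: correctness} we fix the convention of discarding the nega-$s$-adic representation with period $(0[s-1])$, so that the digit string $(\alpha_n)$ assigned to $y$ is unambiguous and the complementation $\alpha_n\mapsto s-1-\alpha_n$ is well defined; with this convention the termwise identities above hold without exceptional cases. I expect this bookkeeping to be essentially the only obstacle, the algebra itself being immediate.
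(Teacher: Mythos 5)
Your proposal is correct and takes essentially the same approach as the paper: the paper's proof is literally a one-line deferral to the proof of Lemma~\ref{lm4}, and what you write out---transferring the nega-$s$-adic digits of $y$ and of $-\frac{s-1}{s+1}-y$ (the digitwise complement) to $s$-adic digits and summing the resulting series to $1$---is exactly that argument transposed to $f^{-1}_+$. One caveat: your closing claim that the convention of discarding period-$(0[s-1])$ representations makes the termwise identities hold \emph{without exceptional cases} is too strong, since digitwise complementation sends an admissible representation to a forbidden one (so at double-representation rationals the two occurrences of $f^{-1}_+$ are being evaluated on differently normalized digit strings); the paper's own proof of Lemma~\ref{lm4} glosses over precisely the same point, so this does not separate your argument from the paper's.
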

The proof of the lemma is analogous to that of Lemma \ref{lm4}. 

\begin{lemma}\cite[pp. 13-14, 19]{Pelyuh:1974:VTF} The equation  \eqref{eq2} has the solutions 
$$
f^{-1} (y)=\frac1 2+\begin{cases}
h\left(-\frac{s-1}{s+1}y-y^2\right),&\text{ $y<-\frac{s-1}{2(s+1)}$;}\\
-h\left(-\frac{s-1}{s+1}y-y^2\right),&\text{ $y> -\frac{s-1}{2(s+1)}$,}
\end{cases}
~~~\text{where}~~~~~h=\sqrt{\frac{1}{4}-g},
$$
and $g$ is any function or
$$
f^{-1} (y)=\frac1 2 +\left(y+\frac{s-1}{2(s+1)}\right)\omega\left(-\frac{s-1}{s+1}y-y^2\right),
$$
where $\omega(y)$ is any function.
\end{lemma}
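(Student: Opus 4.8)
The plan is to recognize equation~\eqref{eq2} as an instance of the symmetric functional equation $\Phi(y)+\Phi(\sigma(y))=b$ and to solve it in closed form exactly as in the cited theory behind the preceding lemma for~\eqref{eq1}. Here $b=1$ and $\sigma(y)=-\frac{s-1}{s+1}-y$ is an involution whose unique fixed point is $y_0=-\frac{s-1}{2(s+1)}$; geometrically $\sigma$ is the reflection $\sigma(y_0+t)=y_0-t$ of the segment $[-\frac{s}{s+1};\frac{1}{s+1}]$ about $y_0$. Evaluating~\eqref{eq2} at $y=y_0$ forces $f^{-1}(y_0)=\frac12$, which explains the additive constant $\frac12$ appearing in both displayed families.

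The first step is to isolate the invariant of the reflection. I would introduce $P(y)=-\frac{s-1}{s+1}y-y^2=y\,\sigma(y)$ and observe that $P(\sigma(y))=P(y)$, while $P(y_1)=P(y_2)$ holds precisely when $\{y_1,\sigma(y_1)\}=\{y_2,\sigma(y_2)\}$. Consequently a function of $y$ is $\sigma$-invariant if and only if it factors through $P$, i.e. has the form $\tilde\psi(P(y))$. This is the technical heart of the argument, and also the step requiring the most care, since it is what guarantees that the two families below \emph{exhaust} all solutions rather than merely producing some; the regularity of the factorization and the behaviour at the fixed point $y_0$ are where subtleties can hide.

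Writing $f^{-1}(y)=\frac12+\Psi(y)$ turns~\eqref{eq2} into the antisymmetry condition $\Psi(y)+\Psi(\sigma(y))=0$. From here the two families follow by two parametrizations of an antisymmetric $\Psi$. For the first family I would form the $\sigma$-invariant product $f^{-1}(y)f^{-1}(\sigma(y))=\frac14-\Psi(y)^2$; by the factorization step it equals $g(P(y))$ for some $g$, whence $\Psi(y)=\pm\sqrt{\frac14-g(P(y))}=\pm h(P(y))$ with $h=\sqrt{\frac14-g}$, the sign being forced to be opposite on the two sides of $y_0$ so as to realize $\Psi(\sigma(y))=-\Psi(y)$ and $\Psi(y_0)=0$; this reproduces the case-defined solution. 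For the second family I would note that the linear factor $y-y_0=y+\frac{s-1}{2(s+1)}$ is itself antisymmetric, since $\sigma(y)-y_0=-(y-y_0)$, so multiplying it by an arbitrary invariant factor $\omega(P(y))$ again yields an antisymmetric $\Psi$; this is exactly $f^{-1}(y)=\frac12+\bigl(y+\frac{s-1}{2(s+1)}\bigr)\omega\bigl(-\frac{s-1}{s+1}y-y^2\bigr)$.

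Finally I would close the loop by substituting each family back into~\eqref{eq2} and checking, using $P(\sigma(y))=P(y)$ and the antisymmetry of the prefactors, that the equation holds identically; this confirms sufficiency, while the factorization step confirms necessity. Since~\eqref{eq2} is the same type of symmetric functional equation as~\eqref{eq1}, only with the nega-$s$-adic complement $\sigma$ in place of the $s$-adic complement $x\mapsto1-x$, the whole computation is the mirror image of the one for~\eqref{eq1}, and the statement follows from the general solution in \cite[pp. 13--14, 19]{Pelyuh:1974:VTF}.
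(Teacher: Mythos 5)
Your proposal is correct, but note that the paper itself contains no proof of this lemma: both this statement and its companion for equation \eqref{eq1} are imported, with a page citation, from the monograph of Pelyuh and Sharkovskyi \cite{Pelyuh:1974:VTF}. So your argument is not an alternative route to the paper's proof --- it is a self-contained reconstruction of the argument that the paper delegates entirely to the reference, and it is the standard one. The substitution $f^{-1}=\frac12+\Psi$ reduces \eqref{eq2} to the antisymmetry condition $\Psi(y)+\Psi(\sigma(y))=0$ about the fixed point $y_0=-\frac{s-1}{2(s+1)}$ of the involution $\sigma(y)=-\frac{s-1}{s+1}-y$; since $y+\sigma(y)$ is constant, the level sets of $P(y)=y\,\sigma(y)=-\frac{s-1}{s+1}y-y^2$ are exactly the $\sigma$-orbits, so $\sigma$-invariant functions are precisely those factoring through $P$; the two displayed families then arise from the invariant product $f^{-1}(y)\,f^{-1}(\sigma(y))=\frac14-\Psi(y)^2=g(P(y))$ and from the antisymmetric prefactor $y-y_0$, respectively. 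What your write-up buys is that it makes the statement verifiable within the paper; what the paper's citation buys is economy. One small caution about your exhaustiveness claim: since $h=\sqrt{\frac14-g}\ge 0$, the first family with the sign fixed as $+$ on $\{y<y_0\}$ and $-$ on $\{y>y_0\}$ cannot represent a solution $\Psi$ whose sign varies within one side of $y_0$; exhaustiveness is genuinely carried by the second family, where $\omega(P(y)):=\Psi(y)/(y-y_0)$ is well defined on each orbit because the numerator and denominator change sign together under $\sigma$. This does not affect the lemma as stated (it asserts that these families are solutions), but the word ``exhaust'' should be attached to the $\omega$-family rather than to the $h$-family.
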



\section{The sets of invariant points}
\label{sec: Invariant points}

\begin{lemma}
The following properties of the  set of invariant points of the function $f^s_k$ are true:
\begin{itemize}
\item the set of invariant points of $f^s _k$ is a continuum set, and its Hausdorff--Besicovitch dimension is equal to  $\frac{1}{k}\log_s j$, when there exists the set  $\{\sigma_1,\sigma_2,...,\sigma_j\}$ $(j \ge 2)$ of k-digit combinations $\sigma_1,...,\sigma_j$ of s-adic digits such that
$$
\theta(a^{(i)} _1,a^{(i)} _2,...,a^{(i)} _k)=(a^{(i)} _1,a^{(i)} _2,...,a^{(i)} _k),~\mbox{where} ~\sigma_i=(a^{(i)} _1 a^{(i)} _2...a^{(i)} _k),~i=\overline{1,j};
$$

\item the  set of invariant points of $f^s_k$ is a finite set, when there exists the unique k-digit combination $\sigma$ of s-adic digits such that
$$
\theta(a_1,a_2,...,a_k)=(a_1,a_2,...,a_k),~\sigma=(a_1a_2...a_k);
$$

\item the  set of invariant points of $f^s_k$ is an empty set, when there not exist any k-digit combination $\sigma$ of s-adic digits such that
$$
\theta(a_1,a_2,...,a_k)=(a_1,a_2,...,a_k),~\sigma=(a_1a_2...a_k).
$$
\end{itemize}
\end{lemma}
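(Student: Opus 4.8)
The plan is to characterize the invariant points of $f^s_k$ directly in terms of the block action of $\theta$, and then to analyze the resulting set as a self-similar set. First I would observe that, for an s-adic irrational number $x = \Delta^s_{\alpha_1\alpha_2\ldots}$ (which has a unique representation), the equation $f^s_k(x) = x$ holds if and only if $\theta(\alpha_{km+1}, \ldots, \alpha_{(m+1)k}) = (\alpha_{km+1}, \ldots, \alpha_{(m+1)k})$ for every $m = 0, 1, 2, \ldots$; that is, every $k$-digit block of $x$ must be one of the fixed combinations $\sigma_1, \ldots, \sigma_j$ of $\theta$. The s-adic rational numbers contribute only a countable set of additional or excluded points, which---by the Remark following the first lemma, where representations with period $(s-1)$ are discarded---affects neither the cardinality nor the Hausdorff--Besicovitch dimension of the invariant set. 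Hence it suffices to describe
$$E = \{\Delta^s_{\alpha_1\alpha_2\ldots} : (\alpha_{km+1}, \ldots, \alpha_{(m+1)k}) \in \{\sigma_1, \ldots, \sigma_j\} \text{ for all } m \geq 0\}.$$

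The three cases then follow from the value of $j$. If $j = 0$, no block can be fixed, so $E = \emptyset$, giving the third bullet. If $j = 1$, every block must coincide with the single fixed combination $\sigma$, so the only invariant irrational point is $\Delta^s_{\sigma\sigma\sigma\ldots}$ and $E$ is finite, giving the second bullet. The main case is $j \geq 2$. Here each invariant point corresponds to a free, independent choice for each block position $m$ of one of the $j$ allowed combinations; this yields a bijection between $E$ and the sequence space $\{1, \ldots, j\}^{\mathbb{N}}$, whose cardinality is that of the continuum since $j \geq 2$, establishing that $E$ is a continuum set.

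To compute the Hausdorff--Besicovitch dimension when $j \geq 2$, I would exhibit $E$ as a self-similar set. For each fixed combination $\sigma_i = (a^{(i)}_1, \ldots, a^{(i)}_k)$ define the similarity
$$\varphi_i(x) = \sum_{r=1}^{k} \frac{a^{(i)}_r}{s^r} + \frac{x}{s^k},$$
which prepends the block $\sigma_i$ to an s-adic expansion and contracts by the ratio $s^{-k}$. Then $E$ is precisely the attractor of the iterated function system $\{\varphi_1, \ldots, \varphi_j\}$, since iterating the maps reconstructs exactly those expansions all of whose blocks lie in $\{\sigma_1, \ldots, \sigma_j\}$. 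Because distinct blocks of length $k$ occupy disjoint s-adic cylinders, the system satisfies the open set condition (using, for instance, the interval $(0,1)$). Moran's formula then gives the dimension $d$ as the solution of $\sum_{i=1}^{j}(s^{-k})^d = 1$, i.e. $j \cdot s^{-kd} = 1$, whence $d = \frac{\log j}{k \log s} = \frac{1}{k}\log_s j$, the asserted value.

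The step I expect to require the most care is confirming that the s-adic rational ambiguity together with the exclusions from the Remark genuinely leave the conclusions intact: one must check that discarding period-$(s-1)$ representations removes only countably many points, so that neither the continuum cardinality nor the dimension $\frac{1}{k}\log_s j$ is disturbed, countable sets having Hausdorff--Besicovitch dimension zero. The remaining ingredient---verifying the open set condition for the iterated function system---is routine given the disjointness of s-adic cylinders of equal length, so the dimension then follows from the standard theory of self-similar sets.
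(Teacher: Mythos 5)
Your treatment of the main (first) bullet is essentially the paper's argument: the paper likewise reduces the lemma to the set of numbers whose s-adic expansion uses only the $\theta$-fixed blocks $\sigma_1,\dots,\sigma_j$, and then simply cites \cite{Serbenyuk:2011:TMV} for the fact that this set is a self-similar fractal whose dimension $\alpha_0$ satisfies $j\left(\frac{1}{s}\right)^{k\alpha_0}=1$. You instead prove that fact from scratch, realizing the set as the attractor of the iterated function system $\varphi_i(x)=\sum_{r=1}^{k}a^{(i)}_r s^{-r}+s^{-k}x$, checking the open set condition, and applying Moran's formula. That inlining is correct and makes the argument self-contained; it costs nothing and removes the dependence on the citation.

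For the second and third bullets, however, there is a genuine gap, located exactly at the step you yourself flagged as delicate. Invariance $f^s_k(x)=x$ is an equality of \emph{numbers}, not of representations, so invariant points can arise not only from expansions all of whose blocks are fixed by $\theta$, but also when $f^s_k$ carries the admissible representation of an s-adic rational $x$ onto the \emph{other} representation of the same number $x$. Your dismissal of rationals ("a countable set of additional or excluded points, which affects neither the cardinality nor the dimension") is valid for the first bullet, but emptiness and finiteness are not stable under countable additions, so it cannot establish the other two. And such additions really occur: take $s=3$, $k=1$, $\theta(0)=2$, $\theta(1)=0$, $\theta(2)=1$. This $\theta$ has no fixed digit (so $j=0$) and the resulting $f$ is not $\Lambda_s$-linear, yet $f\left(\Delta^3_{1000\ldots}\right)=\Delta^3_{0222\ldots}$, i.e.\ $f\left(\frac{1}{3}\right)=\frac{1}{3}$, so the invariant set is nonempty. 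A similar construction with $j=1$ (e.g.\ $s=2$, $k=2$, $\theta(01)=01$, $\theta(00)=11$, $\theta(11)=10$, $\theta(10)=00$) produces countably infinitely many invariant points, contradicting finiteness. To be fair, the paper's own proof is silent on this mechanism --- it treats only representation-wise invariance and asserts the one-element set in the $j=1$ case --- so you have reproduced the paper's reasoning, gap included; but your explicit claim that "it suffices to describe $E$" is precisely the step that fails for those two bullets, and a complete proof would have to classify the rational invariant points produced by the dual-representation mechanism (or the lemma itself would need to be weakened there from "empty"/"finite" to "at most countable").
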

\begin{proof}  From  \cite{Serbenyuk:2011:TMV}, it follows that a set of all numbers from $[0;1]$ such that the s-adic representation of numbers has only combinations from $\{\sigma_1,\sigma_2,...,\sigma_j\}$, where  $(j \ge 2)$ and $\sigma_i$, $i=\overline{1,j}$, are $k$-digit combinations  of s-adic digits, is a self-similar fractal, and its Hausdorff--Besicovitch dimension $\alpha_0$ satisfies the equation
$$
j\left(\frac{1}{s}\right)^{k\alpha_0}=1.
$$

Let there exist a unique s-adic digit combination  $\sigma$. It is obvious that the one-element set 
$$
\{x: x=\Delta^s _{(a_1a_2...a_k)(a_1a_2...a_k)(a_1a_2...a_k)...}\}
$$
is a set of invariant points of $f^s_k$.
\end{proof}

\begin{lemma}
The set of invariant points of the function $f_{+},$ as well as
$f^{-1} _{+},$ is a self-similar fractal, and its Hausdorff--Besicovitch
dimension is equal to $\frac1 2$.
\end{lemma}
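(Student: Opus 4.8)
The plan is to pin down the invariant points of $f_+$ by an explicit condition on their s-adic digits, to recognize the resulting set as one already covered by the self-similarity result invoked above for $f^s_k$, and then to read off the dimension from the associated similarity equation. First I would take an invariant point $x=\Delta^s _{\alpha_1\alpha_2\ldots}$ and impose $f_+(x)=x$; by the definitions of the s-adic and nega-s-adic series this is the equality
$$
\sum_{n=1}^{\infty}\frac{(-1)^n\alpha_n}{s^n}=\sum_{n=1}^{\infty}\frac{\alpha_n}{s^n}.
$$
Subtracting the right-hand side leaves $\sum_{n\ \mathrm{odd}}\frac{\alpha_n}{s^n}=0$, and since every $\alpha_n$ is nonnegative each term must vanish, so $\alpha_n=0$ for all odd $n$ while the even-indexed digits remain free. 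Hence the set of invariant points of $f_+$ is exactly
$$
F=\left\{x=\Delta^s _{0\alpha_2 0\alpha_4\ldots}:\alpha_{2m}\in A\right\}.
$$

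Next I would observe that $F$ consists precisely of those numbers whose s-adic representation uses only the $k=2$ digit combinations taken from $\{00,01,\ldots,0(s-1)\}$, so that exactly $j=s$ combinations are admissible. Because $s>1$ gives $j\ge 2$, the result of \cite{Serbenyuk:2011:TMV} quoted in the lemma on the invariant points of $f^s_k$ applies verbatim: $F$ is a self-similar fractal whose Hausdorff--Besicovitch dimension $\alpha_0$ satisfies $j\left(\frac1s\right)^{k\alpha_0}=1$, that is $s\left(\frac1s\right)^{2\alpha_0}=1$, whence $2\alpha_0=1$ and $\alpha_0=\frac12$.

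For $f^{-1} _+$ I would repeat the argument with the two number systems interchanged: an invariant point $y=\Delta^{-s} _{\alpha_1\alpha_2\ldots}$ satisfies $f^{-1} _+(y)=\Delta^s _{\alpha_1\alpha_2\ldots}=y$, which is the identical equation and again forces $\alpha_n=0$ for odd $n$; since the odd-index terms of the nega-s-adic series vanish too, this is in fact the same real set $F$, of dimension $\frac12$. I do not expect a genuine obstacle here; the only point demanding care is the correct handling of the sign $(-1)^n$ when passing between the s-adic and nega-s-adic series to form the fixed-point equation, together with a remark that the excluded non-unique representations (period $(s-1)$, respectively period $(0[s-1])$) change neither the self-similar structure nor its dimension.
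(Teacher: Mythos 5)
Your proposal is correct and takes essentially the same route as the paper: both reduce $f_+(x)=x$ to the series equality $\sum_{n}(-1)^n\alpha_n s^{-n}=\sum_{n}\alpha_n s^{-n}$, conclude that the invariant points are exactly the numbers whose s-adic representation uses only the $2$-digit combinations $00,01,\ldots,0(s-1)$, and then invoke the self-similarity result of \cite{Serbenyuk:2011:TMV} to obtain $s\left(\frac{1}{s}\right)^{2\alpha_0}=1$, hence $\alpha_0=\frac12$. Your explicit subtraction/nonnegativity step and the separate verification for $f^{-1}_+$ only spell out details the paper leaves implicit.
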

\begin{proof}
  From
$$
\sum^{\infty} _{n=1} {\frac{(-1)^n \alpha_n}{s^n}}=\sum^{\infty} _{n=1} {\frac{\alpha_n}{s^n}}
$$
for any $x \in \left\{x: x=2\Delta^s _{0\alpha_20\alpha_4...0\alpha_{2n}0\alpha_{2n+2}0...}\right\},$ 
 it follows that the set $\{x:~f_{+}(x)=~x\}$ is a set of all numbers from $[0;1]$ such that the s-adic representation of numbers has only 2-digit combinations from          
$\{00,01,02,...,0(s-~1)\}$. Hence, from \cite{Serbenyuk:2011:TMV}, it is follows that the sets of invariant points of the functions $f_+$ and $f^{-1} _+$ are self-similar fractals, and their Hausdorff--Besicovitch dimension $\alpha_0$ satisfies the equation
$$
s \left (\frac{1}{s} \right)^{2\alpha_0}=1.
$$
So, $\alpha_0=\frac{1}{2}$.
\end{proof}


\section{Differential properties}
\label{sec: Differential properties}

\begin{theorem}
The function   $f \in \Lambda_{s}$ such that $f(x)\ne x$, $f(x)\ne -\frac{s-1}{s+1}-x$ and $f(x)\ne 1-x$  is continuous at s-adic irrational or nega-s-adic irrational points, and the s-adic rational or nega-s-adic rational points are points of discontinuity of the function (according to the number representation of the argument of a function $f$).
\end{theorem}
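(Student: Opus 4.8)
The plan is to reduce everything to the block structure of the digit map and to treat the representative case $f=f^s_k$ in full, the remaining types $f_+$, $f^{-1}_+$ and their compositions being handled by the same argument applied to the appropriate (s-adic or nega-s-adic) expansion, exactly as elsewhere in the paper. The single structural fact I would exploit is that $f^s_k$ acts on the digit string in consecutive blocks of length $k$ via the bijection $\theta$: if two arguments share the initial $kM$ digits $a_1\dots a_{kM}$, then by \eqref{form: f^s _k1} their images share the initial $kM$ digits $\theta(A_1)\dots\theta(A_M)$, where $A_i$ denotes the $i$-th block. Since the set of numbers whose s-adic expansion begins with a fixed string $a_1\dots a_{kM}$ is a closed interval (cylinder) of length $s^{-kM}$, sharing a prefix of length $kM$ forces the two values to lie within $s^{-kM}$ of one another.

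For continuity at an s-adic irrational point $x_0=\Delta^s_{a_1a_2\dots}$ I would use that $x_0$ has a unique expansion and therefore lies in the interior of every cylinder $[\Delta^s_{a_1\dots a_{kM}\overline 0},\,\Delta^s_{a_1\dots a_{kM}\overline{(s-1)}}]$ containing it. Given $\varepsilon>0$, pick $M$ with $s^{-kM}<\varepsilon$; then there is a $\delta>0$ such that $|x-x_0|<\delta$ keeps $x$ inside this cylinder, so every such $x$ begins with $a_1\dots a_{kM}$ (in either of its representations, when $x$ is rational). By the structural fact, $f(x)$ and $f(x_0)$ then share $kM$ initial digits and $|f(x)-f(x_0)|\le s^{-kM}<\varepsilon$, giving continuity at $x_0$. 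The identical computation, carried out on the nega-s-adic expansion, establishes continuity at nega-s-adic irrational points.

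For the discontinuity at an s-adic rational point $x_0$ I would compute the two one-sided limits separately. Writing the two expansions $x_0=\Delta^s_{a_1\dots a_{n}\overline 0}=\Delta^s_{a_1\dots a_{n-1}[a_n-1]\overline{(s-1)}}$, points approaching $x_0$ from above share arbitrarily long prefixes with the first (terminating) expansion, while points approaching from below share arbitrarily long prefixes with the second; applying the structural fact and letting the length of the common prefix tend to infinity shows that
\[
\lim_{x\to x_0^+}f(x)=f\!\left(\Delta^s_{a_1\dots a_n\overline 0}\right),\qquad
\lim_{x\to x_0^-}f(x)=f\!\left(\Delta^s_{a_1\dots a_{n-1}[a_n-1]\overline{(s-1)}}\right),
\]
that is, the two one-sided limits are exactly the two values of $f$ at the two representations of $x_0$. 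The well-posedness lemma from Section~\ref{sec: correctness} guarantees that, for every $f\in\Lambda_s$ other than the $\Lambda_s$-linear functions — which is precisely the exclusion $f(x)\ne x$, $f(x)\ne 1-x$, $f(x)\ne-\frac{s-1}{s+1}-x$ — these two values are distinct. Hence the one-sided limits disagree and $f$ has a jump at $x_0$.

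The step requiring the most care is the identification of the one-sided limits with the images of the two representations. It rests on the elementary but essential observation that the s-adic value depends continuously on the digit string (changing digits beyond position $N$ moves the value by at most $s^{-N}$), so that the growing block of zeros (respectively of $(s-1)$'s) in the approximating arguments is transported by $\theta$ into a growing block of $\theta(\mathbf 0)$'s (respectively $\theta(\mathbf{(s-1)})$'s) in the images, pinning down the limit. For the nega-s-adic expansion the only change is that the alternating signs reverse the roles of the left and right limits; since the conclusion is merely that the two limits differ, the discontinuity persists. The compositions $f_+\circ f^s_k$, $f^s_k\circ f^{-1}_+$ and $f_+\circ f^s_k\circ f^{-1}_+$ are then handled by applying the above to each factor on the relevant number system.
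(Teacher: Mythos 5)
Your proof is correct and follows essentially the same route as the paper: continuity at s-adic (or nega-s-adic) irrational points via the block-aligned prefix/cylinder estimate $|f(x)-f(x_0)|\le s^{-kM}$, and discontinuity at rational points by identifying the two one-sided limits with the values of $f$ on the two representations of $x_0$, which are distinct for every non-$\Lambda_s$-linear $f$. The only cosmetic difference is that the paper recomputes the resulting jump explicitly in terms of the $\theta$-images of the blocks $(0,\dots,0)$ and $(s-1,\dots,s-1)$, whereas you delegate the nonvanishing of the jump to the well-posedness lemma of Section~\ref{sec: correctness}; the substance is the same.
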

\begin{proof} Let us consider  a function of the $f^s_k$ type.  Let $x=\Delta^s _{\alpha_1\alpha_2...\alpha_n...} \in~[0;1]$ be an arbitrary number.

Let $x_0$ be an s-adic irrational number.

Then there exists $n_0=n_0(x)$ such that
$$
\left\{
\begin{array}{rcl}
\alpha_m (x)&=&\alpha_m (x_0), ~~~m=\overline{1,n_0-1};\\
\alpha_{n_0} (x)& \ne &\alpha_{n_0} (x_0).\\
\end{array}
\right.
$$
From the system, it follows that the conditions $x \to x_0$ and $n_0 \to \infty$ are equivalent, and 
$$
\left|f^s _k (x) - f^s _k (x_0)\right|=\left|\sum^{\infty} _{j=n_0} {\frac{\beta_j (x)-\beta_j (x_0)}{s^k}}\right|\le \sum^{\infty} _{j=n_0} {\frac{|\beta_j (x)-\beta_j (x_0)|}{s^k}}\le 
$$
$$
\le \sum^{\infty} _{j=n_0} {\frac{s-1}{s^k}}=\frac{1}{s^{n_0-1}}\to 0 ~\mbox{for} ~n_0 \to \infty.
$$

So, the function $f^s _k$  is continuous at s-adic irrational points. For other functions from $\Lambda_{s},$ the proof of this fact is similar.

Let $x_0=\Delta^s _{\beta_1\beta_2...\beta_n...}$ be an s-adic rational number. Then 
$$
\lim_{x \to x_0-0} {f^s _k (x)}=\Delta^s _{\gamma_1\gamma_2...\gamma_t\tau\tau\tau...},~ \mbox{where}~t\equiv 0 \pmod k,~ $$
$$
(\underbrace{\tau\tau...\tau}_{k})=\theta(\underbrace{s-1,s-1,...,s-1}_{k})
$$
and
$$
(\gamma_1,...,\gamma_k)=\theta(\beta_1,...,\beta_k), (\gamma_{k+1},...,\gamma_{2k})=\theta(\beta_{k+1},...,\beta_{2k}),..., 
$$
$$
(\gamma_{r+1},\gamma_{r+2},...,\gamma_{t})=\theta(\underbrace{\beta_{r+1},...,\beta_{n-1},(\beta_n-1),s-1,s-1,...,s-1} _{k}),~r=\left[\frac{n}{k}\right].
$$
$$
\lim_{x \to x_0+0} {f^s _k}(x)=\Delta^s _{\gamma_1\gamma_2...\gamma_r\delta_{r+1}\delta_{r+2}...\delta_{t}\xi\xi\xi...},~ \mbox{where} ~ \theta(\underbrace{0,...,0}_{k})=(\underbrace{\xi,...,\xi}_{k})
$$
and
$$
(\gamma_1,...,\gamma_k)=\theta(\beta_1,...,\beta_k),...,(\gamma_{r-k+1},...,\gamma_r)=\theta(\beta_{r-k+1},...,\beta_r), 
$$
$$
(\underbrace{\delta_{r+1},\delta_{r+2},...,\delta_t} _{k})=\theta(\underbrace{\beta_{r+1},\beta_{r+2},...,\beta_n,0,0,...,0} _{k}).
$$
So, $x_0$ is a point of discontinuity:
$$
\sum^{t} _{j=r+1} {\frac{\delta_j- \gamma_j}{s^j}}+\sum^{\infty} _{j=t+1} {\frac{\xi-\tau}{s^j}}=\sum^{t} _{j=r+1} {\frac{\delta_j- \gamma_j}{s^j}}+\frac{\xi-\tau}{(s-1)s^t}.
$$
The proof is analogous for the functions  $f_+$, $f^{-1} $, $  f_+\circ f^s _k$, $f^s _k \circ f^{-1} _+$, $f_+\circ~f^s _k\circ~f^{-1} _+$.
\end{proof}

\begin{theorem}
The function $f \in \Lambda_{s}$ such that $f$ is not $\Lambda_s$-linear function is nowhere differentiable.
\end{theorem}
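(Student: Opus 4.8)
The plan is to reduce differentiability at a point to the behaviour of difference quotients along carefully chosen digit increments, and to exploit the fact that the block map $\theta$ is neither the identity nor the digit-wise complement. First I would dispose of the rational points for free: by the preceding theorem, the $s$-adic (resp. nega-$s$-adic) rational points are points of discontinuity of every non-$\Lambda_s$-linear $f$, and a function cannot be differentiable where it is discontinuous. So it suffices to prove non-differentiability at each $s$-adic (nega-$s$-adic) irrational point $x_0$, where the representation is unique and the increments below are unambiguous.

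For the core type $f^s_k$, fix an irrational $x_0=\Delta^s_{\alpha_1\alpha_2\ldots}$ with blocks $B_1,B_2,\ldots \in A^k$. For each $m$ and each block $B'\neq B_{m+1}$ let $x$ be obtained from $x_0$ by replacing only its $(m+1)$-th block $B_{m+1}$ with $B'$, keeping every other digit (in particular the whole tail). Since $f^s_k$ acts block-by-block, only the $(m+1)$-th output block changes, and a direct computation gives
$$\frac{f^s_k(x)-f^s_k(x_0)}{x-x_0}=\frac{\Phi(v')-\Phi(v^*)}{v'-v^*},$$
where $v^*,v'$ are the integer values of $B_{m+1},B'$ and $\Phi$ is the permutation of $\{0,1,\ldots,s^k-1\}$ induced by $\theta$ on block-values. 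As $m\to\infty$ we have $x\to x_0$, so differentiability at $x_0$ would force every such quotient to tend to a single limit $L$. Now some block $B^*$ (value $v^*$) recurs infinitely often among the $B_{m+1}$, and because $f^s_k$ is neither $y=x$ nor $y=1-x$ the map $\Phi$ is not affine — the only affine permutations of $\{0,\ldots,s^k-1\}$ being $v\mapsto v$ and $v\mapsto (s^k-1)-v$ — so the secant slopes $(\Phi(v')-\Phi(v^*))/(v'-v^*)$ are not all equal. Selecting two values $v'$ realizing distinct slopes along the recurrent subsequence produces two sequences of increments with different limiting quotients, contradicting differentiability.

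For $f_+$ and $f^{-1}_+$ the argument is even simpler: changing a single digit $\alpha_n$ (keeping the tail) gives difference quotient exactly $(-1)^n$, which oscillates between $+1$ and $-1$ as $n\to\infty$, so no derivative exists at any irrational point. The compositions should then be handled by combining the two mechanisms — the block-value secant slopes of $\theta$ and the alternating nega-$s$-adic signs $(-1)^{km}$ — applied to single-block increments, with the same pigeonhole-and-non-affineness scheme.

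The hard part will be the compositions in which $f_+$ acts on the output side (that is, $f_+\circ f^s_k$ and $f_+\circ f^s_k\circ f^{-1}_+$) when $k$ is even: there the sign factor $(-1)^{km}$ is constant, and for special $\theta$ the nega-$s$-adic block-value map can itself become affine, so that single-block increments yield a constant quotient and the function degenerates to a genuinely affine, hence differentiable, map. (For instance, with $s=2$, $k=2$ and $\theta$ complementing the first digit of each block, $f_+\circ f^2_2$ equals $x\mapsto x-\tfrac23$.) Such degenerate members are precisely the affine elements of $\Lambda_s$ and must be counted among the excluded ``$\Lambda_s$-linear'' functions; once they are excluded, the relevant block-value map is non-affine and the secant-slope dichotomy again produces two distinct limiting quotients. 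Confirming that this affine collapse occurs for no functions other than these, so that the list of exceptions is complete, is the delicate point on which the whole statement turns.
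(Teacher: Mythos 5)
Your handling of the three basic types is correct, and it is tighter than the paper's own proof, which proceeds differently. For $f^s_k$ the paper perturbs a single digit $\alpha_n\to c$ and then simply asserts that the resulting difference quotients take different values for different digit combinations (adding that the derivative can exist only when $f^s_k=x$ or $f^s_k=1-x$); your whole-block increments make this precise, since the quotient collapses to the secant slope $\bigl(\Phi(v')-\Phi(v^*)\bigr)/(v'-v^*)$ of the induced permutation $\Phi$ of $\{0,1,\dots,s^k-1\}$ independently of the block position, and your pigeonhole-plus-nonaffineness step (equality of all secant slopes through a recurrent value forces $\Phi(v)=v$ or $\Phi(v)=s^k-1-v$, i.e.\ $f^s_k=x$ or $f^s_k=1-x$) is a complete argument. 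For $f_+$ the paper substitutes $a=\rho\cos\varphi$, $b=\rho\sin\varphi$ and lets $\varphi$ run over $[0,2\pi]$, which is not justified because the pairs $(a,b)$ realizable as digit-sum differences need not fill a neighbourhood of the origin; your observation that a single-digit change at position $n$ gives the quotient exactly $(-1)^n$ proves the same conclusion rigorously.

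The important part of your proposal is the last paragraph, and you are right: the theorem as literally stated is false, and your counterexample is genuine. With $s=2$, $k=2$, $\theta(a_1,a_2)=(1-a_1,a_2)$ one indeed gets
$$
f_+\bigl(f^2_2(x)\bigr)=\sum_{m\ge 0}\left(-\frac{1-\alpha_{2m+1}}{2^{2m+1}}+\frac{\alpha_{2m+2}}{2^{2m+2}}\right)=-\frac{2}{3}+\sum_{n\ge 1}\frac{\alpha_n}{2^n}=x-\frac{2}{3},
$$
an everywhere differentiable, strictly increasing member of $\Lambda_2$ which is none of the three $\Lambda_s$-linear functions $y=x$, $y=1-x$, $y=-\tfrac13-x$ listed in the paper's Remark (it also contradicts the paper's nonmonotonicity lemma and its Lebesgue-integral theorem). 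The paper never meets this difficulty because its proof dismisses all compositions with the single sentence that the argument ``is analogous,'' and the analogy is exactly what fails: when $k$ is even the sign factor $(-1)^{km}$ is constant, and the composite block-value map can be affine. One refinement to your analysis: the same collapse also occurs when the nega-$s$-adic conversion acts on the \emph{input} side, e.g.\ $f^2_2\circ f^{-1}_+$ with the same $\theta$ equals $y\mapsto y+\tfrac23$, so the degenerate cases are not confined to $f_+\circ f^s_k$ and $f_+\circ f^s_k\circ f^{-1}_+$. Consequently the ``delicate point'' you flag---determining all affine members of $\Lambda_s$ and restating the exclusion as ``$f$ is not affine''---is not a removable gap in your write-up but a necessary repair of the theorem itself; once the exclusion is so restated, your secant-slope scheme completes the proof, whereas the paper's argument does not.
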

\begin{proof}
Consider function $f^s _k$. Let $(x_n)$ be a  sequence  of numbers such that $x_n=\Delta^s _{\alpha_1\alpha_2...\alpha_{n-1}\alpha_n\alpha_{n+1}...} \in [0;1]$. Fix a number  $x_0=\Delta^s _{\alpha_1\alpha_2...\alpha_{n-1}c\alpha_{n+1}...}$, where $c$ is a fixed s-adic digit such that it is used infinitely many times in the s-adic representation of $x_0$. Then
$$
x_n-x_0=\frac{\alpha_n-c}{s^n},
$$
$$
f^s _k(x_n)-f^s _k(x_0)=\Delta^s _{\underbrace{0...0} _{r}[\gamma_{r+1}-\gamma^{'} _{r+1}][\gamma_{r+2}-\gamma^{'} _{r+2}]...[\gamma_{n}-\gamma^{'} _{n}]...[\gamma_{r+k}-\gamma^{'} _{r+k}]000...}, ~\mbox{where}
$$
$r=\left[\frac{n}{k}\right]$ is the integer part of $\frac{n}{k}$ and
$$
(\gamma_{r+1},\gamma_{r+2},...,\gamma_{n-1},\gamma_n,\gamma_{n+1},...,\gamma_{r+k})=\theta(\alpha_{r+1},\alpha_{r+2},...,\alpha_{n-1},\alpha_n,\alpha_{n+1},...,\alpha_{r+k}),
$$
$$
(\gamma^{'} _{r+1},\gamma^{'} _{r+2},...,\gamma^{'} _{n-1},\gamma^{'} _n,\gamma^{'} _{n+1},...,\gamma^{'} _{r+k})=\theta(\alpha_{r+1},\alpha_{r+2},...,\alpha_{n-1},c,\alpha_{n+1},...,\alpha_{r+k}).
$$
It is clear that  the conditions   $x \to x_0$ and  $n \to \infty$ are equivalent. Therefore,
$$
{f^s _k}^{'} (x_0)=\lim_{n \to \infty} {\frac{\sum^{r+k} _{i=r+1} {\frac{\gamma_i - \gamma^{'} _i}{s^i}}}{\frac{\alpha_n-c}{s^n}}}.
$$
  Since, for different combinations $\alpha_{r+1}...\alpha_n...\alpha_{r+k}$ and $\alpha_{r+1}...\alpha_{n-1}c\alpha_{n+1}...\alpha_{r+k},$ of s-adic digits, the derivative of $f^s _k$ at the point $x_0$ has different values, the function $f^s _k$ is nowhere differentiable.

The derivative of $f_k^s$ exists only if 
$$
\sum^{r+k} _{i=r+1} {\frac{\gamma_i - \gamma^{'} _i}{s^i}}=\pm \frac{\alpha_n-c}{s^n},
$$
i.e.,  when $f^s _k=x$ or $f^s _k=1-x$.

Let us consider the function $f_+$ and the notations
$$
a=\sum^{\infty} _{n=1} {\frac{\alpha_{2n-1}(x)}{s^{2n-1}}}-\sum^{\infty} _{n=1} {\frac{\alpha_{2n-1}(x_0)}{s^{2n-1}}},
$$
$$
b=\sum^{\infty} _{n=1} {\frac{\alpha_{2n}(x)}{s^{2n}}}-\sum^{\infty} _{n=1} {\frac{\alpha_{2n}(x_0)}{s^{2n}}}.
$$
Then
$$
(f_{+}(x_0))'=\lim_{x \to x_0} {\frac{f(x)-f(x_0)}{x-x_0}}=\lim_{a \to 0,b \to 0}{\frac{b-a}{a+b}}=\left[
\begin{array}{rcl}
a&=&\rho \cos(\varphi),\\
b & = &\rho \sin(\varphi),\\
& &\rho \to 0, \\
\end{array}
\right]=
$$
$$
=\lim_{\rho \to 0}{\frac{\sin(\varphi)-\cos(\varphi)}{\sin(\varphi)+\cos(\varphi)}}, \varphi=\overline{0,2\pi},
$$
i.e., the  limit does not exist. It is similar for  $f^{-1} _+$ and other functions $f \in \Lambda_{s}$.
\end{proof}


\section{The Hausdorff-Besicovitch dimension of  graphs of considered functions}
\label{sec: Hausdorff-Besicovitch dimension}

\begin{theorem}
The Hausdorff--Besicovitch dimension of the graph of any function from the class  $\Lambda_{s}$ is equal to~$1$. 
\end{theorem}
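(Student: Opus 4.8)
The plan is to prove the two inequalities $\dim_H G \le 1$ and $\dim_H G \ge 1$ separately, where $G$ denotes the graph of $f$. For the lower bound, observe that the coordinate projection $\pi(x,y)=x$ is $1$-Lipschitz and maps $G$ onto the whole domain interval ($[0;1]$ or $[-\frac{s}{s+1};\frac{1}{s+1}]$), since $f$ is defined at every point of that interval. As $1$-Lipschitz maps do not increase Hausdorff dimension, $\dim_H G \ge \dim_H \pi(G) = 1$.

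For the upper bound I would use the block structure of the representations to build an efficient box cover. Consider first $f^s_k$ and fix $N$ to be a multiple of $k$. The domain $[0;1]$ splits into $s^{N}$ cylinder intervals $I$ of length $s^{-N}$, each determined by the first $N$ s-adic digits of the argument. By the defining relation \eqref{form: f^s _k1} the first $N$ output digits are then also fixed, so $f^s_k(I)$ lies in an s-adic cylinder of length $s^{-N}$; hence the portion of $G$ over $I$ is contained in a box of side $s^{-N}$. Covering $G$ by these $s^{N}$ boxes, each of diameter $\sqrt 2\, s^{-N}$, gives, for every $d>1$,
$$
\sum_{\text{boxes}} (\mathrm{diam})^{d} = s^{N}\bigl(\sqrt 2\, s^{-N}\bigr)^{d} = (\sqrt 2)^{d}\, s^{N(1-d)} \to 0 \quad (N\to\infty),
$$
so $H^{d}(G)=0$ and therefore $\dim_H G \le 1$.

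The same cover works for the remaining types. For $f_+$ (and symmetrically $f^{-1}_+$) fixing the first $N$ digits of the argument confines the value to a nega-s-adic cylinder of length $s^{-N}$, because
$$
\sum_{n>N}\frac{(-1)^n\alpha_n}{s^n}=\frac{(-1)^N}{s^N}\sum_{j\ge1}\frac{(-1)^j\alpha_{N+j}}{s^j}
$$
is a rescaled copy of the full nega-s-adic range. For a composition such as $f_+\circ f^s_k$, $f^s_k\circ f^{-1}_+$ or $f_+\circ f^s_k\circ f^{-1}_+$ I would again take $N$ a multiple of $k$: fixing the first $N$ digits of the innermost argument fixes the first $N$ digits after the first map, and each subsequent map then confines its output to a cylinder of length $s^{-N}$. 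Thus every function in $\Lambda_s$ sends the $s^{N}$ basic intervals into sets of diameter $s^{-N}$, the identical summation yields $\dim_H G\le1$, and together with the lower bound we conclude $\dim_H G=1$; the $\Lambda_s$-linear functions are line segments, for which the equality is trivial.

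The step I expect to be the main obstacle is the uniform oscillation estimate for the compositions, namely verifying that the property ``prescribing the first $N$ input digits forces an output cylinder of length $s^{-N}$'' survives the passage between s-adic and nega-s-adic representations under $f_+$ and $f^{-1}_+$, together with the associated bookkeeping of how the two families of cylinders correspond.
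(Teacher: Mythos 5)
Your proof is correct, and its upper-bound half is essentially the paper's argument: the paper also covers the graph by the $s^{N}$ (respectively $s^{mk}$) cylinder boxes of side $s^{-N}$ and observes that $s^{N}\bigl(\sqrt{2}\,s^{-N}\bigr)^{\alpha}=2^{\alpha/2}s^{(1-\alpha)N}\to 0$ for $\alpha>1$. Where you genuinely differ is the lower bound. The paper never isolates it as a separate step: after computing the box-counting sum it invokes the \emph{fractal cell entropy dimension} $\alpha^{K}$ of \cite{Pratsiovytyi:2002:FVO} and asserts that, because the graphs ``have self-similar properties,'' one gets $\alpha^{K}(\Gamma)=\alpha_{0}(\Gamma)=1$ --- that is, it relies on the general inequality between entropy (box) dimension and Hausdorff dimension together with an unproved appeal to self-similarity for the reverse direction. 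Your Lipschitz-projection argument (the map $\pi(x,y)=x$ sends $\Gamma$ onto the full domain interval, and $1$-Lipschitz maps do not increase Hausdorff dimension) replaces that appeal by a standard, self-contained estimate, so your version is the more rigorous of the two. It also makes the treatment of the compositions cleaner: the only point that needs checking --- and you check it --- is that a rank-$N$ input cylinder (with $k\mid N$) is mapped into a rank-$N$ output cylinder, a nega-s-adic cylinder of rank $N$ having length $\sum_{n>N}(s-1)s^{-n}=s^{-N}$ just like an s-adic one. The ``main obstacle'' you flag is indeed the only bookkeeping issue, and it goes through exactly as you sketch, since $f_{+}$ and $f^{-1}_{+}$ carry digit strings to identical digit strings and therefore send rank-$N$ cylinders to rank-$N$ cylinders; the $\Lambda_s$-linear functions, whose graphs are line segments, are trivial in both treatments.
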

\begin{proof} From the definition and properties of the functions   $f_+$ and  $f^{-1} _+$,  it  follows that the graph of the function belongs to $s$ squares from $s^2$ first-rank squares:  
$$
\sqcap_{(ii)}=\left[\frac{i}{s};\frac{i+1}{s}\right]\times\left[-\frac{i+1}{s};-\frac{i}{s}\right],~i \in A ~~\mbox{for}~ f_+,
$$
$$
\sqcap_{(ii)}=\left[-\frac{i+1}{s};-\frac{i}{s}\right]\times\left[\frac{i}{s};\frac{i+1}{s}\right],~i \in A ~~\mbox{ for } ~f^{-1} _+,
$$
i.e., $\sqcap_{(00)},\sqcap_{(11)},\sqcap_{(22)},...,\sqcap_{((s-1)(s-1))}$.

The graph of each of the functions belongs to  $s^2$ squares  from  $s^4$ second-rank squares:
$$
\sqcap_{(i_1i_1)(i_2i_2)}=\left[\frac{i_1}{s}+\frac{i_2}{s^2};\frac{i_1}{s}+\frac{i_2+1}{s^2}\right]\times\left[-\frac{i_1}{s}+\frac{i_2}{s^2};-\frac{i_1}{3}+\frac{i_2+1}{s^2}\right] \mbox{for}~ f_+,
$$
and 
$$
\sqcap_{(i_1i_1)(i_2i_2)}=\left[-\frac{i_1}{s}+\frac{i_2}{s^2};-\frac{i_1}{3}+\frac{i_2+1}{s^2}\right]\times\left[\frac{i_1}{s}+\frac{i_2}{s^2};\frac{i_1}{s}+\frac{i_2+1}{s^2}\right] \mbox{for} ~f^{-1} _+,
$$
$i_1 \in A, i_2 \in A$, i.e.,
\begin{itemize}
\item The part of the graph, which is in the square $\sqcap_{(00)},$ belongs to $s$ squares  $\sqcap_{(00)(00)},\sqcap_{(00)(11)},  \sqcap_{(00)(22)}, ..., \sqcap_{(00)((s-1)(s-1))}$;

\item the part of the graph, which is in the square $\sqcap_{(11)},$  belongs to $s$ squares  $\sqcap_{(11)(00)},\sqcap_{(11)(11)},  \sqcap_{(11)(22)}, ..., \sqcap_{(11)((s-1)(s-1))}$;
$$
...................................................................
$$
\item the part of the graph, which is in the square $\sqcap_{((s-1)(s-1))},$  belongs to $s$ squares  $\sqcap_{((s-1)(s-1))(00)}$, $\sqcap_{((s-1)(s-1))(11)}$,  $\sqcap_{((s-1)(s-1))(22)}$, ..., $\sqcap_{((s-1)(s-1))((s-1)(s-1))}$, etc.
\end{itemize}

The graphs $\Gamma_{f_+} $ and $\Gamma_{f^{-1} _+}$ of the functions $f_+$ and $f^{-1} _+$ belong to  $s^m$ squares of rank  $m$ with  side  $s^{-m}$. Then
$$
\widehat{H}_{\alpha} (\Gamma_{f_+})=\widehat{H}_{\alpha} (\Gamma_{f^{-1} _+})=\varliminf_{m \to \infty} {s^m \left(\sqrt{s^{-2m}+s^{-2m}}\right)^{\alpha}}=\varliminf_{m \to \infty} {s^m \left(2\cdot s^{-2m}\right)^{\frac{\alpha}{2}}}=
$$
$$
=\varliminf_{m \to \infty} {\left(s^{\frac{2m}{\alpha}-2m}\cdot 2\right)^{\frac{\alpha}{2}}}=\varliminf_{m \to \infty} {\left(2^{\frac{\alpha}{2}}\cdot (s^{1-\alpha})^{m}\right)}.
$$

It is obvious that if $s^{(1-\alpha)m} \to 0$ for $\alpha >1,$ and the graphs of the functions have self-similar properties, then $\alpha^K (\Gamma_{f_+})=\alpha^K (\Gamma_{f^{-1} _+})=\alpha_0 (\Gamma_{f^{-1} _+})=~\alpha_0 (\Gamma_{f _+})=~1$, where $\alpha^K (E)$ is the fractal cell entropy dimension \cite{Pratsiovytyi:2002:FVO} of the set $E$.

To find the Hausdorff--Besicovitch dimension of the graph of the function $f^s _k,$ we use a s-adic square of the rank that is multiple to $k$.

From the definition of $f^s _k$ and properties of $f^s _k,$ it follow that  the graph of the function belongs to  $s^k$ squares from $s^{2k}$ squares of rank $k$.

The graph of  $f^s _k$ belongs to   $s^{2k}$  squares from   $s^{4k}$ squares  of rank $2k$, etc.

So, the graph $\Gamma_{f^s _k}$  of $f^s _k$  belongs to  $s^{mk}$ squares of rank $mk$ with sides  $s^{-mk}$, $m \in \mathbb N$. These squares are  $s^{2mk}$. Therefore,
$$
\hat{H}_{\alpha}(\Gamma_{f^s _k})=\varliminf_{m \to \infty} {s^{mk}\left(\sqrt{2\cdot s^{-2mk}}\right)^{\alpha}}=\varliminf_{m \to \infty}{s^{mk}\left(2^{\frac{\alpha}{2}}\cdot s^{-\alpha mk}\right)}=\varliminf_{m \to \infty}{s^{mk}{\left(2^{\frac{\alpha}{2}}\cdot s^{(1-\alpha)mk} \right)}}.
$$
It is similar with $f_+$ and $f^{-1} _+$  that $s^{(1-\alpha)mk} \to 0$ for  $\alpha>1$ and $\alpha^K (\Gamma_{f^s _k})=~\alpha_0 (\Gamma_{f^s _k})=~1$.  

The proof is analogous for other functions from $\Lambda_s$.
\end{proof}

\section{Lebesgue integral}
\label{sec:Lebesgue integral}

\begin{theorem} Let  $f \in \Lambda_{s}$. Then
$$
\int\limits_{D(f)} f(x) \, \mathrm dx=\frac{1}{2},~\mbox{where  $D(f)$ is the domain of definition of $f$.}
$$
\end{theorem}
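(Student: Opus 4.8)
The plan is to reduce the claim to a single term-by-term integration and to extract everything from the bijectivity of $\theta$. I would start with the generating type $f^s_k$, whose domain is $[0;1]$, and write its value as $f^s_k(x)=\sum_{n=1}^{\infty}\beta_n(x)s^{-n}$, where $\beta_n(x)$ is the $n$-th s-adic digit of $f^s_k(x)$. Since $0\le\beta_n(x)\le s-1$, the partial sums are dominated by the constant $1$, so dominated convergence lets me integrate termwise:
\[
\int_{0}^{1} f^s_k(x)\,\mathrm dx=\sum_{n=1}^{\infty}\frac{1}{s^n}\int_{0}^{1}\beta_n(x)\,\mathrm dx.
\]
The whole computation then reduces to the single-digit averages $\int_0^1\beta_n(x)\,\mathrm dx$.

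The key step is that, under Lebesgue measure on $[0;1]$, the s-adic digits $\alpha_1,\alpha_2,\dots$ are independent and each is uniformly distributed on $A=\{0,1,\dots,s-1\}$, once the countable (hence null) set of s-adic rationals excluded in Section~\ref{sec: correctness} is discarded. Hence the blocks $(\alpha_{km+1},\dots,\alpha_{(m+1)k})$ are independent and uniform on $A^k$, and since $\theta$ is a bijection of $A^k$ it carries the uniform law on $A^k$ to itself; each coordinate of $\theta$ is therefore again uniform on $A$. Consequently every output digit has the same average $\int_0^1\beta_n(x)\,\mathrm dx=\frac1s\sum_{a=0}^{s-1}a=\frac{s-1}{2}$, independent of $n$, and summing the geometric series gives
\[
\int_0^1 f^s_k(x)\,\mathrm dx=\frac{s-1}{2}\sum_{n=1}^{\infty}\frac{1}{s^n}=\frac{s-1}{2}\cdot\frac{1}{s-1}=\frac12 .
\]

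For $f^{-1}_+$ the domain $[-\frac{s}{s+1};\frac{1}{s+1}]$ also has length $1$, its nega-s-adic digits are again independent and uniform on $A$ after discarding the analogous null set, and the value is the s-adic number $\sum_n\alpha_n s^{-n}$; the identical computation returns $\frac12$, and $f^s_k\circ f^{-1}_+$ goes through verbatim because $\theta$ still preserves the uniform law on blocks. The point I expect to be the real obstacle is the family whose \emph{value} is recorded in nega-s-adic form, namely $f_+$, $f_+\circ f^s_k$ and $f_+\circ f^s_k\circ f^{-1}_+$: there the value equals $\sum_n(-1)^n\beta_n s^{-n}$, so the same digit average $\tfrac{s-1}{2}$ enters with alternating signs and termwise integration yields $\frac{s-1}{2}\sum_{n\ge1}(-1)^n s^{-n}=-\frac{s-1}{2(s+1)}$, which is the midpoint of the value interval $[-\frac{s}{s+1};\frac{1}{s+1}]$ and not $\frac12$. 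Reconciling these members with the stated value is the delicate part: one must either read the conclusion as ``the integral equals the centre of the range of $f$'', which specialises to $\frac12$ exactly for the s-adic-valued members $f^s_k$, $f^{-1}_+$ and $f^s_k\circ f^{-1}_+$, or restrict the assertion to those members. Once that normalisation is settled, verifying term-by-term integration and the uniform-digit property is routine, so this single interpretive step is where I expect the genuine work to concentrate.
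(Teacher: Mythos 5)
Your argument for the s-adic--valued members is correct, and it takes a genuinely different route from the paper. The paper never integrates digit-by-digit: it writes a self-similarity relation for the integral $I$ itself. For $f^s_k$ the graph decomposes into $s^k$ affine copies of itself over the rank-$k$ cylinders, and bijectivity of $\theta$ forces the vertical offsets of these copies to run over $0,\tfrac{1}{s^k},\dots,\tfrac{s^k-1}{s^k}$, giving $I=s^{-k}I+\tfrac{s^k-1}{2s^k}$ and hence $I=\tfrac12$. Both proofs ultimately rest on the same fact (a bijection of $A^k$ equidistributes the output blocks), so for $f^s_k$, $f^{-1}_+$ and $f^s_k\circ f^{-1}_+$ the approaches are parallel; yours is, if anything, the more fully justified one, since the paper does not address measurability or the interchange of sum and integral at all.

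The obstacle you flag at the end is not a defect of your argument but a genuine error in the paper. For $f_+$ the paper asserts the relation $I=\frac{(s-1)s}{2s^2}+sI\frac{1}{s^2}$ and concludes $I=\tfrac12$; but that relation treats the rank-one pieces of the graph of $f_+$ as upward-stepping translated copies, whereas they actually step downward and are reflected: on the cylinder $\alpha_1=i$ one has $f_+\left(\frac{i+t}{s}\right)=-\frac{i}{s}-\frac{1}{s}f_+(t)$, so the correct relation is $I=-\frac{1}{s}I-\frac{s-1}{2s}$, i.e. $I=-\frac{s-1}{2(s+1)}$ --- exactly the value you computed. This is also forced by the paper's own Lemma~4: integrating $f_+(x)+f_+(1-x)=-\frac{s-1}{s+1}$ over $[0;1]$ gives $2I=-\frac{s-1}{s+1}$, so the theorem as stated contradicts that lemma. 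Hence the claim $\int_{D(f)}f\,\mathrm{d}x=\tfrac12$ is true only for the members whose values are written s-adically ($f^s_k$, $f^{-1}_+$, $f^s_k\circ f^{-1}_+$), while for $f_+$, $f_+\circ f^s_k$ and $f_+\circ f^s_k\circ f^{-1}_+$ the integral equals $-\frac{s-1}{2(s+1)}$; your proposed reformulation (the integral is the midpoint of the range of $f$) is the correct repair, and your proof establishes it in full.
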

\begin{proof}The conditions of existence of the Lebesgue integral are true for the function  $f^s_k$. Since the function has self-similar properties, the Lebesgue integral $I$  of $f^s_k$  can be calculated by the following equality: 
$$
I=\frac{1}{s^{2k}}s^k I+\frac{s^{2k}-s^k}{2}\cdot\frac{1}{s^{2k}},
$$
$$
I\left(1-\frac{1}{s^k}\right)=\frac{s^k-1}{2s^k}, I=\frac{1}{2}.
$$
It is similar for the functions $f_+$ and  $f^{-1} _+$:
$$
I=\frac{(s-1)s}{2s^2}+sI\frac{1}{s^2}=\frac{1}{s}I+\frac{s-1}{2s},
$$
whence $I=\frac{1}{2}$.

For the other functions $f \in \Lambda_s,$ one can calculate  their Lebesgue integrals with regard for the definition of the corresponding integral.
\end{proof}




\begin{thebibliography}{9}


\bibitem{Falkoner:1990:FG} K.~J.~Falconer, \emph{Fractal Geometry}, Chichester, Wiley, 1990.

\bibitem{Hensley:1992:CFC} D.~Hensley, Continued fraction Cantor  sets, Hausdorff dimension, and functional analysis, \emph{J.~Number Theory} \textbf{40}, 336--358 (1992).
  
\bibitem{Pelyuh:1974:VTF} H.~Pelyuh, A.~Sharkovskyi,  \emph{Vvedenie v teoriu functsionalnyh uravneniy} [\emph{Introduction to the Theory of Functional Equations}], Naukova dumka, Kyiv, 1974. 

\bibitem{Pratsevityi:1989:NKP} M.~Pratsiovytyi, Nepreryvnye kantorovskie proektory [Continuous Cantor  projectors], v: \emph{Metody issledovania algebraicheskih i topologicheskih struktur}, in: [\emph{Methods of Study of Algebraic and Topological Structures}], Kyiv, 95--105, 1989.

\bibitem{Pratsiovytyi:1998:FPD} M.~Pratsiovytyi, \emph{Fraktalnyi pidhid u doslidzhennjah synguliarnyh rozpodiliv} [\emph{Fractal Approach to Investigation of Singular Probability Distributions}], Vydavnytstvo NPU im. M.~P.~Dragomanova, Kyiv, 1998.  

\bibitem{Pratsiovytyi:2002:FVO} M.~Pratsiovytyi, Fractalni vlastyvosti odnieii neperervnoii nide ne dyferentsiovnoii funktsii   [Fractal properties of one  continuous nowhere differentiable function],  \emph{Naukovi Zapysky NPU im. M.~P.~Dragomanova. Phizyko-matematychni Nauky} [\emph{Trans. Natl. Pedagog. Mykhailo Dragomanov Univ. Phys. Math.}]   \textbf{3}, 351--362 (2002).  


\bibitem{Serbenyuk:2012:OMS} S.~Serbenyuk, Pro odnu maizhe skriz' neperervnu i nide ne dyferentsiovnu funktsiu, yaka zadana avtomatom zi skinchennou pamiattu   [On one nearly everywhere continuous and  nowhere differentiable function, that defined by automaton with finite memory],  \emph{Naukovyi Chasopys NPU im. M.~P.~Dragomanova. Ser.~1. Phizyko-matematychni Nauky} [\emph{Trans. Natl. Pedagog. Mykhailo Dragomanov Univ. Ser.~1. Phys. Math.}]  \textbf{13(2)}, 166--182 (2012).  

\bibitem{Serbenyuk:2011:TMV} S.~Serbenyuk, Topologo-metrychni vlastyvosti ta vykorystannya odniei uzagalnenoi mnozhyny, zadanoi s-kovym zobrazhennyam z parametrom   [Topological, metric properties
and use of one  generalized set, that  defined by s-adic representation with parameter],  \emph{Naukovyi Chasopys NPU im. M.~P.~Dragomanova. Ser.~1. Phizyko-matematychni Nauky} [\emph{Trans. Natl. Pedagog. Mykhailo Dragomanov Univ. Ser.~1. Phys. Math.}] \textbf{12}, 66--75 (2011).  

\bibitem{Turbin:1992:FMF} A.~Turbin, M.~Pratsiovytyi,   \emph{Fraktalnye mnozhestva, funkcii, rasspredeleniya}   [\emph{Fractal Sets, Functions, Probability Distributions}],  Naukova dumka, Kyiv, 1992.  

\end{thebibliography}


\end{document}